\newtheorem{prop}{Proposition}[]
\newcommand{\bu}[0]{\mathbf{u}}
\newcommand{\bx}[0]{\mathbf{x}}
\newcommand{\bn}[0]{\mathbf{n}}
\newcommand{\dd}[0]{\mathrm{d}}
\newcommand{\EOS}[0]{\mathrm{EOS}}
\newcommand{\KCFL}{K_\mathrm{CFL}}
\newcommand{\Fr}{\mathrm{Fr}}
\newcommand{\exex}{\mathrm{EXEX}}
\newcommand{\imex}{\mathrm{IMEX}}
\newcommand{\acou}{\mathrm{Acou}}
\newcommand{\trans}{\mathrm{Trans}}
\newcommand{\cO}{\mathcal{O}}
\newcommand{\cN}{\mathcal{N}}
\newcommand{\nbR}{\mathbb{R}}
\newcommand{\nbZ}{\mathbb{Z}}
\newcommand{\ndot}{\nabla \cdot}
\newcommand{\dt}{\Delta t}
\newcommand{\dx}{\Delta x}
\newcommand{\pt}{\partial_t}
\newcommand{\px}{\partial_x}
\title{An all-regime and well-balanced Lagrange-projection type scheme for the shallow water equations on unstructured meshes}
\author{Christophe Chalons\thanks{Laboratoire de Math\'ematiques de Versailles, UMR 8100, Universit\'e de Versailles Saint-Quentin-en-Yvelines, UFR des Sciences, b\^atiment Fermat, 45 avenue des Etats-Unis, 78035 Versailles cedex, France,  (christophe.chalons@uvsq.fr).}
\and{Samuel Kokh\thanks{CEA/DEN/DANS/DM2S/STMF/LMEC, CEA Saclay, 91191 Gif-sur-Yvette, France, (samuel.kokh@cea.fr).}}
\and{Maxime Stauffert\thanks{Laboratoire de Math\'ematiques de Versailles, UMR 8100, Universit\'e de Versailles Saint-Quentin-en-Yvelines, UFR des Sciences, b\^atiment Fermat, 45 avenue des Etats-Unis, 78035 Versailles cedex, France, (maxime.stauffert@uvsq.fr).}\textsuperscript{~}\thanks{Maison de la Simulation, USR 3441, Digiteo Labs, b\^atiment 565, CEA Saclay, 91191 Gif-sur-Yvette, France.}}}
\begin{document}
\maketitle

\begin{abstract}
In this work, we focus on the numerical approximation of the shallow water equations in two space dimensions.
Our aim is to propose a well-balanced, all-regime and positive scheme.
By well-balanced, it is meant that the scheme is able to preserve the so-called lake at rest smooth equilibrium solutions.
By all-regime, we mean that the scheme is able to deal with all flow regimes, including the low-Froude regime which is known to be challenging when using usual Godunov-type finite volume schemes.
At last, the scheme should be positive which means that the water height stays positive for all time. Our approach is based on a Lagrange-projection decomposition which allows to naturally decouple the acoustic and transport terms.
Numerical experiments on unstructured meshes illustrate the good behaviour of the scheme.
\end{abstract}

\section{Introduction}

We are interested in the numerical approximation of the shallow water equations (SWE)
\begin{subequations}
\label{eq: intro_swe}
\begin{empheq}[left=\empheqlbrace]{align}
&\pt h + \ndot (h\bu) = 0,\\
&\pt (h\bu) + \ndot (h\bu\otimes\bu) + \nabla \frac{gh^2}{2} = -gh\nabla z,
\end{empheq}
\end{subequations}
where $\bx\in\nbR^2 \mapsto z(\bx)$ denotes a given smooth topography and $g > 0$ is the gravity constant.
Both the water depth $h$ and the velocity $\bu=(u_1,u_2)\in\nbR^2$ depend on the space and time variables, namely $\bx \in \nbR^2$ and $t \in [0,\infty)$. 
We assume that the initial water depth $h(\bx,t=0)=h_0(\bx)$ and velocity $\bu(\bx,t=0)=\bu_0(\bx)$ are given.

Let us briefly properties of system~\eqref{eq: intro_swe} in the case $\nabla z = 0$~: the system is strictly hyperbolic over the phase space $\Omega=\{(h,h\bu)\in\nbR^3~|~h>0\}$. 
Moreover, if $\bn\in\nbR^2$ is an arbitrary unit vector, the eigenstructure of~\eqref{eq: intro_swe} is composed by two genuinely nonlinear characteristic fields associated with the eigenvalues $\{\bu^T\bn-c,\bu^T\bn+c\}$, where $c \coloneqq \sqrt{gh}$ is the sound speed, and a linearly degenerated field associated with the eigenvalue $\bu^T\bn$. 
We recall also that the regions where $(\bu^T\bn)^2<c^2$ (resp. $(\bu^T\bn)^2>c^2$) are called subcritical or subsonic (resp. supercritical or supersonic).

We are interested in this work in developing a numerical scheme that satisfies the well-balanced property.
More specifically we want our scheme to strictly preserve the "lake at rest" steady solutions, that are the states satisfying
\[
h+z=\mathrm{constant}, \qquad \bu = \mathbf{0}.
\]
For a review on numerical schemes that satisfy the so-called well-balanced property we refer the reader to the pioneering work~\cite{BV94}, books~\cite{Bou04} and \cite{Gos13}.
We also refer to~\cite{CKKS17} where the authors focus on the 1D case and already propose a well-balanced Lagrange-projection strategy.
At last, in~\cite{CCL18} the proposed Lagrange-projection scheme is exact for a full set of equilibrium solutions (and not only the lake at rest).

The Lagrange-projection methodology is especially well suited for subsonic or near low-Froude number flows.
We use an implicit-explicit strategy that allows to keep a stable scheme under a CFL time step limitation which is driven only by (slow) material waves and not by (fast) acoustic waves.
The implicit-explicit Lagrange-projection~\cite{GR96} scheme is designed following the pioneering work~\cite{CNPT10}.
More recent works are concernet with the case of Euler systems in the large friction or low-Mach regimes~\cite{CGK13,CGK14,CGK16} for single or two-phase flow models.
The treatment of the low Froude number is considered through the so called all-regime (or asymptotic-preserving) property and follows the anti-diffusive technique on the pressure numerical flux introduced in~\cite{Del10} and also used in~\cite{CGK16}.

The SWE has been largely studied and one can find nice overviews and references in the books \cite{Bou04} and~\cite{Gos13}.
The scheme proposed in~\cite{CKKS17} in one dimension has been studied in the framework of SWE and more specifically its behaviour for low-Froude number flows in~\cite{Zak17}.
A different implicit-explicit methodology in two dimension context has been proposed by~\cite{BALMN14}.

In section 2, we study the dimensionless system associated to the SWE~\eqref{eq: intro_swe} and its asymptotic limit in low Froude regimes.
In section 3, we present the Lagrange-projection like acoustic / transport decomposition associated to system~\eqref{eq: intro_swe}.
In section 4, we present the schemes, the finite volume scheme in 1D, the study of its truncation error in low Froude regimes and the proposed correction, and finally the extension towards 2D schemes on unstructured meshes.
At last, we show some numerical results in 2D to verify the well-balanced property and illustrate the behaviour of the scheme in different regimes, especially in the low Froude one.
\section{Low Froude limit for continuous equations}

\subsection{Dimensionless shallow water equations}
In this section, we briefly introduce the dimensionless SWE.
These equations will be useful to study the low-Froude asymptotic behaviour of the solutions of~\eqref{eq: intro_swe}.
With this in mind, we define the following dimensionless quantities
\begin{align*}
\tilde{t} = \frac{t}{T}, && \tilde{\bx} = \frac{\bx}{L}, && \tilde{h} = \frac{h}{h_0}, && \tilde{\bu} = \frac{\bu}{u_0}, && \tilde{z} = \frac{z}{z_0},
\end{align*}
where $T$, $L$, $h_0$, $u_0$ and $z_0$ are respectively reference time, length, water height, velocity and topography such that
\begin{equation*}
u_0 = \frac{L}{T} \quad \mathrm{and} \quad z_0 = h_0.
\end{equation*}
Defining the Froude number $\Fr$ by 
$$\Fr = \frac{u_0}{c_0},$$ 
where $c_0 = \sqrt{gh_0}$ is the reference sound speed, easy calculations then give the dimensionless SWE
\begin{subequations}\label{eq: low_froude_swe}
\begin{empheq}[left=\empheqlbrace]{align}
&\partial_{\tilde{t}} \tilde{h} + \nabla_{\tilde{\bx}} \cdot \left(\tilde{h}\tilde{\bu}\right) = 0, 
\\ 
&\partial_{\tilde{t}} \left(\tilde{h}\tilde{\bu}\right) + \nabla_{\tilde{\bx}} \cdot \left(\tilde{h} \tilde{\bu} \otimes \tilde{\bu}\right) + \frac{1}{\Fr^2} \nabla_{\tilde{\bx}} \frac{\tilde{h}^2}{2} = -\frac{1}{\Fr^2} \tilde{h} \nabla_{\tilde{\bx}} \tilde{z}.
\end{empheq}
\end{subequations}

\subsection{Asymptotic equations in low Froude limit}
In this section, we give the asymptotic behaviour of the solutions of the SWE equations in the low Froude limit. 
If we omit the tilde notation for the sake of readability in system~\eqref{eq: low_froude_swe} and, if we introduce the dimensionless pressure function $p(h)=\frac{h^2}{2}$, we get
\begin{subequations}\label{eq: low_froude_swe2}
\begin{empheq}[left=\empheqlbrace]{align}
&\pt h + \ndot \left(h\bu\right) = 0, 
\\ 
&\pt \left(h\bu\right) + \ndot \left(h \bu \otimes \bu\right) + \frac{1}{\Fr^2} \nabla p = -\frac{1}{\Fr^2} h \nabla z.
\end{empheq}
\end{subequations}
Let us assume that $h$ and $z$ admit the following expansions in powers of the Froude number:
\begin{align*}
h = h^{(0)} + h^{(1)} \Fr + h^{(2)} \Fr^2 + \cO(\Fr^3) && \text{and} && \bu = \bu^{(0)}
 + \bu^{(1)} \Fr + \bu^{(2)} \Fr^2 + \cO(\Fr^3),
\end{align*}
which gives in particular 
\[p = p^{(0)} + p^{(1)} \Fr + p^{(2)} \Fr^2 + \cO(\Fr^3) = p(h^{(0)}) + h^{(1)} p^{\,\prime}(h^{(0)}) \Fr + \cO(\Fr^2). \]
The governing equations give at order -2 and -1 with respect to the Froude number that 
\begin{align*}
\nabla p^{(0)} + h^{(0)} \nabla z = 0 &\Leftrightarrow \nabla h^{(0)} = - \nabla z \Leftrightarrow h^{(0)} + z = H(t),
\\
\nabla p^{(1)} + h^{(1)} \nabla z = 0 &\Leftrightarrow h^{(0)} \nabla h^{(1)} = 0 \Leftrightarrow \nabla h^{(1)} = 0 \Leftrightarrow h^{(1)} = h^{(1)}(t).
\end{align*}
The asymptotic behavior is then given by

\begin{subequations}\label{eq: dev first order}
\begin{empheq}[left=\empheqlbrace]{align}
\pt h^{(0)} + \ndot (h^{(0)} \bu^{(0)}) &= 0,
\label{eq: dev first order h}
\\
\pt (h^{(0)} \bu^{(0)}) + \ndot (h^{(0)} \bu^{(0)} \otimes \bu^{(0)}) + \nabla p^{(2)} &= - h^{(2)} \nabla z.
\end{empheq}
\end{subequations}
Now if we impose one of the following velocity boundary conditions
\begin{align*}
\left(\int_{\Omega} \ndot \bu \, \dd\Omega = 0 \text{ and } \int_{\Omega} \ndot (z\bu) \, \dd\Omega = 0\right)  
&& \text{or} && 
\left(\int_{\Omega} \ndot (h \bu) \, \dd\Omega = 0\right),
\end{align*}
integrating \eqref{eq: dev first order h} with respect to the space  variable gives
\begin{align*}
0 &= \int_{\Omega} \left(\pt h^{(0)} + \ndot (h^{(0)} \bu^{(0)})\right) \, \dd\Omega
\\
&= \int_{\Omega} \pt (H - z) \, \dd\Omega + \int_{\Omega} \ndot (\left(H - z\right) \bu^{(0)}) \, \dd\Omega
\\
&= \int_{\Omega} \pt H \, \dd\Omega + H \int_{\Omega} \ndot \bu^{(0)} \, \dd\Omega - \int_{\Omega} \ndot (z \bu^{(0)}) \, \dd\Omega
\\
&=  \pt \int_{\Omega} H \, \dd\Omega = \lvert \Omega \rvert \pt (h^{(0)} + z) = \lvert \Omega \rvert \pt h^{(0)}
\end{align*}
thus $\pt h^{(0)} = 0$ and $h^{(0)} + z = H$ is constant both in space and time.
This leads to $\ndot (h^{(0)}\bu^{(0)}) = 0$ and therefore
\[
\ndot \bu^{(0)} = \ndot (\frac{z}{H}\bu^{(0)}),
\]
while the evolution of $\bu$ is given by
\begin{equation*}
\left(1-\frac{z}{H}\right)\pt \bu^{(0)} + \ndot (\bu^{(0)} \otimes \bu^{(0)}) + \frac{1}{H} \nabla p^{(2)}
= \ndot (\frac{z}{H}\bu^{(0)} \otimes \bu^{(0)}) - h^{(2)} \nabla \frac{z}{H}.
\end{equation*}
Notice that when the topography is flat, \textit{i.e.} $z=0$, the three equations
\begin{equation*}
\left\{
\begin{aligned}
&h^{(0)} + {z} = H = \text{cst} 
\\
&\ndot \bu^{(0)} = \ndot (\frac{z}{H}\bu^{(0)})
\\
&\left(1-\frac{z}{H}\right)\pt \bu^{(0)} + \ndot (\bu^{(0)} \otimes \bu^{(0)}) + \frac{1}{H} \nabla p^{(2)}
= \ndot (\frac{z}{H}\bu^{(0)} \otimes \bu^{(0)}) - h^{(2)} \nabla \frac{z}{H}
\end{aligned}
\right.
\end{equation*}
degenerate towards the incompressible Euler equations
\begin{equation*}
\left\{
\begin{aligned}
&h^{(0)} = \mathrm{cste} 
\\
&\ndot \bu^{(0)} = 0
\\
&\pt \bu^{(0)} + \ndot (\bu^{(0)} \otimes \bu^{(0)})
+ \frac{1}{h^{(0)}} \nabla p^{(2)} = 0.
\end{aligned}
\right.
\end{equation*}

\section{An acoustic/transport operator decomposition}\label{sec: splitting}
Let us first introduce notations related to our discretization.
We suppose that the computational domain $\Omega \subset \nbR^2$ is covered by $N$ polygonal cells $\left(\Omega_j\right)_{1\leq j \leq N}$.
We consider $\Gamma$, a face of the cell $j$, and we suppose the following admissibility assumptions are satisfied:

\begin{itemize}
\item either there exists a single $1\leq k\leq N$ such that $\Gamma=\overline{\Omega_j}\cap\overline{\Omega_k}\not= \emptyset$.
In this case we note  $\Gamma = \Gamma_{jk}$ and $\Gamma_{jk}$ can either be a vertex or a single face of the mesh,
\item either $\Gamma\subset \partial \Omega$ and we suppose that there exists a single $k>N$ that will help to index ghost values for boundary conditions and we shall note $\Gamma = \Gamma_{jk}$.
\end{itemize}
For $1\leq j\leq N$, we note $\cN(j)$ the set of indices $k$ such that $\Gamma_{jk}$ is a face of $\Omega_j$ and if $k\in\cN(j)$ we set $\bn_{jk}$ to be the unit normal vector to $\Gamma_{jk}$ pointing out of $\Omega_j$.

We can now turn to the acoustic~/~transport decomposition of the system~\eqref{eq: intro_swe}.
If we develop the spatial derivatives and isolate the transport terms $(\bu\cdot\nabla)\varphi$, where $\varphi = h, h\bu$, we can use a splitting operator with respect to time to obtain on one hand the acoustic step
\begin{equation}\label{eq: splitting_acoustic}
\pt h + h \nabla\cdot(\bu) = 0,
\quad
\pt (h \bu) + h \bu (\ndot \bu) + \nabla p= -g h \nabla z,
\end{equation}
and on the other hand the transport step
\begin{equation}\label{eq: splitting_transport}
\pt h + (\bu\cdot\nabla) h =0
,\quad
\pt (h\bu )+ (\bu\cdot\nabla)  (h\bu) =0.
\end{equation}

With these notations, the Lagrange-projection algorithm is defined as follows: for a given discrete state $(h,h\bu)^{n}_{j}$, $j\in\nbZ$, defining $(h,h\bu)^{n+1}_{j}$ is a two-step process defined as follows
\begin{enumerate}
\item Update $(h,h\bu)^{n}_{j}$ to $(h,h\bu)^{n+1-}_{j}$ by approximating the solution of system~\eqref{eq: splitting_acoustic},
\item Update $(h,h\bu)^{n+1-}_{j}$ to $(h,h\bu)^{n+1}_{j}$ by approximating the solution of system~\eqref{eq: splitting_transport}.
\end{enumerate} 

{\it Relaxation approximation of the acoustic system.}
Before entering the details of these two steps in the following section, let us note that if we denote $\tau=1/h$, by simple manipulations system~\eqref{eq: splitting_acoustic} can be recast into:
\begin{equation*}
\pt \tau - \tau(\bx,t) \nabla\cdot \bu =0,
\quad
\pt \bu + \tau(\bx,t) \nabla p = - \tau(\bx,t)  \frac{g}{\tau} \nabla z.
\end{equation*}
Following~\cite{CNPT10}, we will choose to approximate the solution of system~\eqref{eq: splitting_acoustic} thanks to a Suliciu-relaxation process.
More precisely we will solve
\begin{equation}\label{eq: splitting_relaxation}
\left\{
\begin{aligned}
\pt \tau &- \tau(\bx,t) \nabla\cdot \bu &&=0,
\\
\pt \bu &+ \tau(\bx,t) \nabla \Pi &&= - \tau(\bx,t)  \frac{g}{\tau} \nabla z,
\\
\pt \Pi &+ \tau(\bx,t) a^2 \nabla\cdot \bu &&= \lambda ( p^{\EOS}(\tau)- \Pi),
\end{aligned}
\right.
\end{equation}
with $p = p^{\EOS}(\tau) = g/(2\tau^2)$, in the regime $\lambda \rightarrow +\infty$.
The parameter $a$ is a constant that is chosen in agreement with the subcharacteristic stability conditions that will be given later.
Over the time interval $\left[t^n,t^n+\dt\right)$, we can account for the limit $\lambda \rightarrow +\infty$ by setting  $\Pi(\bx,t^n) = p^{\EOS}( \tau(\bx,t^n))$, and then solving the relaxed system with $\lambda=0$.
We add another approximation by supposing that over $\left[t^n,t^n+\dt\right)$ it is reasonable to replace $\tau(\bx,t)\partial_{x_r}$ by $\tau(\bx,t^n)\partial_{x_r}$, $r=1,2$.
Finally, we will define our approximation of the acoustic system~\eqref{eq: splitting_acoustic} by solving
\begin{equation}\label{eq: splitting_relaxation2}
\left\{
\begin{aligned}
\pt \tau &- \tau(\bx,t^n) \nabla\cdot \bu &&= 0,
\\
\pt \bu &+ \tau(\bx,t^n) \nabla \Pi  &&= - \tau (\bx,t^n) \frac{g}{\tau} \nabla z,
\\
\pt \Pi &+ \tau(\bx,t^n) a^2 \nabla\cdot \bu &&= 0,
\end{aligned}
\right.
\end{equation}
over $\left[t^n,t^n+\dt\right)$, with $\Pi(\bx,t^n) = p^{\EOS}( \tau(\bx,t^n))$.

Note that the system~\eqref{eq: splitting_relaxation2} is rotational invariant.
This will allow us in the following to define every flux for the two dimensional problem in the reference frame associated to each face.
In this last referential, the problem will be reduced to a quasi-one dimensional problem that we study in the beginning of next section.
One can also notice that the eigenstructure of~\eqref{eq: splitting_relaxation2} in the phase space $\left\{(h,h\bu^T,\Pi,z)\in \nbR^5, h>0, z>0\right\}$ is very simple since it has three eigenvalues $\left\{-a,0,a\right\}$ all associated to linearly degenerated characteristic fields.
\section{Finite volume approximation}
In this paragraph, we present in details the first-order finite volume scheme associated with the acoustic / transport decomposition of section~\ref{sec: splitting}. 

\subsection{A well-balanced Lagrange-projection finite volume scheme in 1D}\label{ssec: scheme_1D}
We start by considering one-dimensional problems and briefly recall the method proposed in \cite{CKKS17}.
In this case the Saint-Venant equations read
\[\left\{
\begin{aligned}
&\pt h + \px (h u_1) = 0,
\\
&\pt (h u_1) + \px \left(h u_1^2 + g\frac{h^2}{2}\right) = -g h\px z,
\\
&\pt (h u_2) + \px (h u_1 u_2) = 0.
\end{aligned}
\right.\]
The system associated with the acoustic step reads
\[\left\{
\begin{aligned}
&\pt \tau - \tau(x,t^n) \px u_1 = 0,
\\
&\pt u_1 + \tau(x,t^n) \px \Pi =  - \tau(x,t^n)\frac{g}{\tau}\px z,
\\
&\pt u_2 = 0,
\\
&\pt \Pi + \tau(x,t^n) a^2 \px u_1 =  0,
\end{aligned}
\right.\]
and the system that accounts for transport boils down to
$$
\pt \varphi + u_1 \px \varphi = 0,
\qquad
\varphi\in\{h, u_1, u_2\}.
$$

We suppose given a strictly increasing sequence $x_{j+1/2} \in\nbR$, for $j\in\nbZ$ and we consider the set of cells $\Omega_j = [x_{j-1/2},x_{j+1/2})$.
The local space step is defined by $\dx_j = x_{j+1/2} - x_{j-1/2}$.
We note $\dt >0 $ the time step and we set $t^n = n\dt$ for $n\in\mathbb{N}$.

The following discretization strategy was presented in~\cite{CKKS17}:  the acoustic step~\eqref{eq: splitting_relaxation2} is approximated by
\begin{subequations}\label{eq: scheme_1D_acoustic}
\begin{empheq}[left=\empheqlbrace]{align}
\tau_j^{n+1-} = \tau_j^n - \tau_j^n \frac{\dt}{\dx_j} \left( u^\sharp_{j+1/2} - u^\sharp_{j-1/2} \right)
\\
(u_1)_j^{n+1-} = (u_1)_j^n - \tau_j^n \frac{\dt}{\dx_j} \left( \Pi^{L,\sharp}_{j+1/2} - \Pi^{R,\sharp}_{j-1/2} \right)
\\
(u_2)_j^{n+1-} = (u_2)_j^n 
\\
\Pi_j^{n+1-} = \Pi_j^n - \tau_j^n \frac{\dt}{\dx_j} a^2 \left( u^\sharp_{j+1/2} - u^\sharp_{j-1/2} \right)
\end{empheq}
\end{subequations}
where for all $j$, $\Pi_j^n = g\frac{{h_j^n}^2}{2}$ and the numerical fluxes $u^\sharp_{j+1/2}$, $\Pi^{L,\sharp}_{j+1/2}$ and $\Pi^{R,\sharp}_{j-1/2}$ are defined by
\begin{align*}\label{eq: scheme_1D_flux}
u^\sharp_{j+1/2} &= u_\Delta (\mathbf{U}_{j}^\sharp,\mathbf{U}_{j}^n,\mathbf{U}_{j+1}^\sharp,\mathbf{U}_{j+1}^n),
\\
\Pi^{R,\sharp}_{j+1/2} &= \Pi^R_\Delta (\mathbf{U}_{j}^\sharp,\mathbf{U}_{j}^n,\mathbf{U}_{j+1}^\sharp,\mathbf{U}_{j+1}^n),
\\
\Pi^{L,\sharp}_{j+1/2} &= \Pi^L_\Delta (\mathbf{U}_{j}^\sharp,\mathbf{U}_{j}^n,\mathbf{U}_{j+1}^\sharp,\mathbf{U}_{j+1}^n),
\end{align*}
where $ \mathbf{U}$ is the state $\begin{pmatrix}h\\h\bu\\\Pi\\z\end{pmatrix}$ and with
$$
\{gh\Delta z\}_\Delta (\mathbf{U}_L^n,\mathbf{U}_R^n)  = g\frac{h_L^n + h_R^n}{2}(z_R - z_L)
$$
$$
\Pi_\Delta (\mathbf{U}_L^{\sharp},\mathbf{U}_R^{\sharp}) = \frac{\Pi_L^{\sharp} + \Pi_R^{\sharp}}{2} - a\frac{(u_1)_R^\sharp - (u_1)_L^\sharp}{2}
$$
$$
u_\Delta (\mathbf{U}_L^\sharp,\mathbf{U}_L^n,\mathbf{U}_R^\sharp,\mathbf{U}_R^n) = \frac{(u_1)_L^\sharp + (u_1)_R^\sharp}{2} - \frac{\Pi_R^\sharp - \Pi_L^\sharp}{2a} - \frac{1}{2a}\{gh\Delta z\}_\Delta (\mathbf{U}_L^n,\mathbf{U}_R^n)
$$
$$
\Pi^L_\Delta (\mathbf{U}_L^\sharp,\mathbf{U}_L^n,\mathbf{U}_R^\sharp,\mathbf{U}_R^n) = \Pi_\Delta (\mathbf{U}_L^{\sharp},\mathbf{U}_R^{\sharp}) + \frac{1}{2}\{gh\Delta z\}_\Delta (\mathbf{U}_L^n,\mathbf{U}_R^n)
$$
$$
\Pi^R_\Delta (\mathbf{U}_L^\sharp,\mathbf{U}_L^n,\mathbf{U}_R^\sharp,\mathbf{U}_R^n) = \Pi_\Delta (\mathbf{U}_L^{\sharp},\mathbf{U}_R^{\sharp}) - \frac{1}{2}\{gh\Delta z\}_\Delta (\mathbf{U}_L^n,\mathbf{U}_R^n)
$$
If one chooses $\sharp=n$  (resp. $\sharp=n+1^-$) the system~\eqref{eq: scheme_1D_acoustic} provides a time-explicit (resp. time-implicit) discretization of the acoustic system~\eqref{eq: splitting_relaxation2}.
The approximation of the transport step is performed thanks to a standard upwind scheme for $\varphi\in\{h,hu_1,hu_2\}$
\begin{equation}\label{eq: scheme_1D_transport}
\varphi_j^{n+1} = \varphi_j^{n} - \frac{\dt}{\dx_j} \left( u_{j+1/2}^\sharp \varphi_{j+1/2}^{n+1-} - u_{j-1/2}^\sharp \varphi_{j-1/2}^{n+1-} \right)
- \frac{\dt}{\dx_j} \varphi_{j}^{n+1-} \left( u_{j+1/2}^\sharp - u_{j-1/2}^\sharp \right),
\end{equation}
where
$$
\varphi_{j+1/2}^{n+1-} =
\begin{cases}
\varphi_{j}^{n+1-},& \text{if $u_{j+1/2}^\sharp \geq 0 $,}\\
\varphi_{j+1}^{n+1-},& \text{if $u_{j+1/2}^\sharp < 0 $.}
\end{cases}
$$

Note that $\{gh\Delta z\}_\Delta$ that accounts for the gravity source term is always evaluated at time $t^n$, even for the time implicit scheme.

In the above formulas, the parameter $a$ is an approximation of the Lagrangian sound speed $hc=h\sqrt{gh}$ and must satisfy the sub-characteristic condition $a > hc$ which ensures that the relaxed system~\eqref{eq: splitting_relaxation} is a dissipative approximation of the acoustic step~\eqref{eq: splitting_acoustic} (see \cite{Bou04,chalons2008,chalons2,Despres2010-book} and the references therein). 
In order to limit the numerical diffusion we take a local approximation of the Lagrangian sound speed at every interface, given by $a_{j+1/2} = \kappa \max(h_{j}\sqrt{gh_{j}},h_{j+1}\sqrt{gh_{j+1}})$, where $\kappa>1$.

For detailed properties of the numerical scheme~\eqref{eq: scheme_1D_acoustic}-\eqref{eq: scheme_1D_transport} we refer the reader to~\cite{CKKS17}, nevertheless let us recall that: the overall discretization is conservative in the usual sense of finite volumes methods with respect to $(h,h u_1, hu_2)$.
Moreover, the scheme is also well-balanced for lake at rest conditions: if $\bu^n_j=0$ and $h^n_j + z^n_j=h^n_{j+1} + z^n_{j+1}$ for all $j\in\nbZ$, then $h^{n+1}_j=h^{n}_j$ and $\bu^{n+1}_j=\bu^{n}_j$, $j\in\nbZ$.
At last, the time-implicit scheme is stable under a condition which does not depend either on the acoustic system or the sound speed $c$, but which only depends on the transport step and its material velocity $\bu$ which is of particular interest in the low-Froude regime.

\subsection{Truncation error in the low-Froude regime}\label{ssec: scheme_truncation}
In this paragraph, we consider the dimensionless shallow-water equation and we motivate a correction of the above scheme in order to make it efficient in low-Froude regimes. The correction is similar to the one in~\cite{CGK16} for low-Mach regimes and we focus on the explicit case $\sharp=n$.

In the following we will say that the flow is in the low Froude regime if $\Fr\ll 1$ and $\px p + h \px z = \cO(\Fr^2)$.
Regarding the dimensionless equations~\eqref{eq: low_froude_swe}, we can observe that, in this regime, the variations of the discharge $hu$ remain of order $1$ as expected.

We can express the fluxes given in the previous section, using the dimensionless quantities, which leads to 
$$
u^{n}_{j+1/2} = \frac{1}{2}(u^{n}_j+u^{n}_{j+1}) - \frac{1}{2 a \Fr}\left(\Pi^{n}_{j+1}-\Pi^{n}_j + \frac{h_j^n+h_{j+1}^n}{2}(z_{j+1}-z_j)\right),
$$
$$
\Pi^{L,n}_{j+1/2} = \frac{\Pi_j^n}{\Fr^2} + \frac{1}{2\Fr^2}\left(\Pi^n_{j+1}-\Pi^n_j + \frac{h_j^n+h_{j+1}^n}{2}(z_{j+1}-z_j)\right) - \frac{a}{2\Fr}(u^n_{j+1}-u^n_j),
$$
$$
\Pi^{R,n}_{j+1/2} = \frac{\Pi_{j+1}^n}{\Fr^2} - \frac{1}{2\Fr^2}\left(\Pi^n_{j+1}-\Pi^n_j + \frac{h_j^n+h_{j+1}^n}{2}(z_{j+1}-z_j)\right) - \frac{a}{2\Fr}(u^n_{j+1}-u^n_j),
$$
if one focuses on the time-explicit scheme for the sake of simplicity.

If we compute the truncation errors in the fluxes above, using the fact that 
$$\Pi^n_{j+1}-\Pi^n_j + \frac{h_j^n+h_{j+1}^n}{2}(z_{j+1}-z_j) = \cO(\Fr^2 \dx),$$
we obtain:
$$
u^{n}_{j+1/2} = \frac{1}{2}(u^{n}_j+u^{n}_{j+1}) + \cO(\Fr \dx),
$$
$$
\Pi^{L,n}_{j+1/2} = \frac{\Pi_j^{n}}{\Fr^2} + \frac{1}{2\Fr^2}\left(\Pi^{n}_{j+1}-\Pi^{n}_j + \frac{h_j^n+h_{j+1}^n}{2}(z_{j+1}-z_j)\right) + \cO(\frac{\dx}{\Fr}),
$$
$$
\Pi^{R,n}_{j+1/2} = \frac{\Pi_{j+1}^{n}}{\Fr^2} - \frac{1}{2\Fr^2}\left(\Pi^{n}_{j+1}-\Pi^{n}_j + \frac{h_j^n+h_{j+1}^n}{2}(z_{j+1}-z_j)\right) + \cO(\frac{\dx}{\Fr}).
$$

At this stage, it is clear that the consistence errors are not uniform with respect to the Froude number in the pressure fluxes.
In order to avoid large errors in the numerical diffusion terms when the Froude tends to zero, we propose to correct the flux formula of $\Pi_\Delta (\mathbf{U}_L^{\sharp},\mathbf{U}_R^{\sharp})$ by:
$$
\Pi^\theta_\Delta (\mathbf{U}_L^{\sharp},\mathbf{U}_R^{\sharp}) = \frac{\Pi_L^{\sharp} + \Pi_R^{\sharp}}{2} - \theta a\frac{(u_1)_R^\sharp - (u_1)_L^\sharp}{2}
$$
which amounts to reduce the numerical diffusion on the pressure gradient in the low Froude regime.
Indeed, we now get
$$
\Pi^{L,n,\theta}_{j+1/2} = \frac{\Pi_j^{n}}{\Fr^2} + \frac{1}{2\Fr^2}\left(\Pi^{n}_{j+1}-\Pi^{n}_j + \frac{h_j^n+h_{j+1}^n}{2}(z_{j+1}-z_j)\right) + \cO(\frac{\theta_{j+1/2}\dx}{\Fr}),
$$
$$
\Pi^{R,n,\theta}_{j+1/2} = \frac{\Pi_{j+1}^{n}}{\Fr^2} - \frac{1}{2\Fr^2}\left(\Pi^{n}_{j+1}-\Pi^{n}_j + \frac{h_j^n+h_{j+1}^n}{2}(z_{j+1}-z_j)\right) + \cO(\frac{\theta_{j+1/2}\dx}{\Fr}),
$$
and as long as we take $\theta_{j+1/2} = \cO(\Fr)$, we recover the uniform consistency of the global scheme with respect to the Froude number.
In practice, we will set $\theta_{j+1/2}=\min \left(\frac{\lvert u^n_{j+1/2}\rvert}{\max(c_{j},c_{j+1})},1\right)$.

\subsection{The Lagrange-projection scheme on 2D unstructured meshes}
We now extend the Lagrange-projection scheme in two dimensions.
Let $\bn\in\nbR^2$ be a unit vector and $\mathbf{U}^T=(h,h\bu^T)$, we define
$$
R_\bn = 
\begin{bmatrix}
n_1 & n_2 \\
- n_2 & n_1
\end{bmatrix}
,\qquad
T_\bn \mathbf{U} = 
\begin{bmatrix}
h \\
h (R_\bn\bu)\\
\Pi\\
z 
\end{bmatrix}
.
$$

Following standard lines, we take advantage of the 
rotational invariance of the acoustic system~\eqref{eq: splitting_relaxation2} to define the two-dimensional fluxes numerical fluxes (see for example \cite{GR96,Bou04}). This leads to
\begin{subequations}\label{eq: scheme_2D_acoustic}
\begin{empheq}[left=\empheqlbrace]{align}
\label{eq: scheme_2D_acoustic_a}
\tau^{n+1-}_j &= \tau^n_j + \tau_j^n \dt \sum_{k\in \cN(j)} \sigma_{jk}\, u_{jk}^{\sharp},
\\
\label{eq: scheme_2D_acoustic_b}
\bu^{n+1-}_j &= \bu^n_j - \tau_j^n \dt \sum_{k\in \cN(j)} \sigma_{jk} \, {\Pi}_{jk}^{\sharp,\theta} \bn_{jk},
\\
\Pi^{n+1-}_j &= \Pi^n_j - \tau_j^n \dt \sum_{k\in \cN(j)} \sigma_{jk} \, (a_{jk})^2 u_{jk}^{\sharp},
\end{empheq}
\end{subequations}
where
\begin{align*}
u_{jk}^{\sharp} &= u_\Delta (T_{\bn_{jk}} \mathbf{U}_j^{\sharp}, T_{\bn_{jk}} \mathbf{U}_j^{n}, T_{\bn_{jk}} \mathbf{U}_k^{\sharp}, T_{\bn_{jk}} \mathbf{U}_k^{n}),
\\
\Pi_{jk}^{\sharp,\theta} &= \Pi_\Delta^{L,\theta} (T_{\bn_{jk}} \mathbf{U}_j^{\sharp}, T_{\bn_{jk}} \mathbf{U}_j^{n}, T_{\bn_{jk}} \mathbf{U}_k^{\sharp}, T_{\bn_{jk}} \mathbf{U}_k^{n}),
\end{align*}
that is to say
\begin{align*}
u_{jk}^{\sharp} &= \frac{1}{2} \bn_{jk}^T (\bu^\sharp_j + \bu^\sharp_k) - \frac{1}{2 a_{jk} } (\Pi^\sharp_k - \Pi^\sharp_j) - \frac{1}{2a_{jk}} \left\{gh\Delta z\right\}_{jk}^n,
\\
\Pi_{jk}^{\sharp,\theta} &= \frac{1}{2} (\Pi^\sharp_j + \Pi^\sharp_k) - \frac{a_{jk}\theta_{jk}}{2} \bn_{jk}^T( \bu^\sharp_k - \bu^\sharp_j )+\frac{1}{2}\left\{gh\Delta z\right\}_{jk}^n,
\end{align*}
with
\begin{align*}
a_{jk} &\geq \max [(h c)_j^n, (h c)_k^n],
\\
\left\{gh\Delta z\right\}_{jk}^n &= g \frac{h^n_j+h^n_k}{2} (z_k - z_j).
\end{align*}
The source term is accounted for by the terms $\Pi_{jk}^{\sharp,\theta}$ since the fluxes $\Pi_{jk}^{\sharp,\theta}\bn_{jk}$ in Equation~\eqref{eq: scheme_2D_acoustic_b} are not symmetric, indeed $\Pi_{jk}^{\sharp,\theta}\bn_{jk} \not= -\Pi_{kj}^{\sharp,\theta}\bn_{kj}$ (even if $a_{jk}=a_{kj}$ and $\theta_{jk}=\theta_{kj}$, which is the case in practice).

As far as the transport step is concerned and in order to discretize the system~\eqref{eq: splitting_transport}, we use an explicit scheme between times $t^{n+1-}$ and $t^{n+1-}+\Delta t$, where the fluxes are chosen upwind with respect to the sign of $u_{jk}^{\sharp}$.
If $\varphi \in\{ h, h\bu\}$, the scheme for the transport step reads
\begin{equation}\label{eq: scheme_2D_transport}
\varphi_j^{n+1} = \varphi_j^{n+1-} - \dt \sum_{k\in \cN(j)} \sigma_{jk} \varphi_{jk}^{n+1-} u_{jk}^{\sharp} + \dt  \varphi_j^{n+1-} \sum_{k\in \cN(j)} \sigma_{jk} u_{jk}^{\sharp},
\end{equation}
where 
\[ \varphi_{jk}^{n+1-} = 
\begin{cases}
\varphi_j^{n+1-}, &\text{if $u_{jk}^{\sharp}\geq 0$,}\\
\varphi_k^{n+1-}, &\text{if $u_{jk}^{\sharp}< 0$.}
\end{cases} 
\]

Note that one can rewrite the transport step~\eqref{eq: scheme_2D_transport} as follows
\[ \varphi_j^{n+1} =  L^{\sharp}_j \varphi_j^{n+1-} - \dt \sum_{k\in \cN(j)} \sigma_{jk} u_{jk}^{\sharp} \varphi_{jk}^{n+1-}, \]
where $L^{\sharp}_j = 1 +  \dt \sum_{k\in \cN(j)} \sigma_{jk}\, u_{jk}^{\sharp}$.
Therefore, replacing the quantities $\varphi_j^{n+1-}$ using system~\eqref{eq: scheme_2D_acoustic} gives the following update formulas which take into account the acoustic and transport steps together:
\begin{subequations}\label{eq: scheme_2D_global}
\begin{empheq}[left=\empheqlbrace]{align}
h^{n+1}_j &= h^n_j - \dt \sum_{k\in \cN(j)} \sigma_{jk} h_{jk}^{n+1-} u_{jk}^{\sharp},
\\
(h\bu)^{n+1}_j &= (h\bu)^n_j - \dt \sum_{k\in \cN(j)} \sigma_{jk}\, \left((h\bu)_{jk}^{n+1-} u_{jk}^{\sharp} + \Pi_{jk}^{\sharp,\theta} \bn_{jk}\right),
\end{empheq}
\end{subequations}
where the quantities $u_{jk}^{\sharp}$ and $\Pi_{jk}^{\sharp,\theta}$ are computed with $\bu^{\sharp}$ and $\Pi^{\sharp}$, and the quantities $h_{jk}^{n+1-}$ and $(h\bu)_{jk}^{n+1-}$ with $\tau^{n+1-}$ and $\bu^{n+1-}$ from system~\eqref{eq: scheme_2D_acoustic}.

\subsection{Stability and well-balanced properties}

Let us first notice that the scheme is conservative with respect to the water height $h$, and with respect to $h\bu$ if the topography is flat ($z=\mathrm{cste}$).
In particular, it degenerates towards the scheme proposed by~\cite{CGK16} adapted to the framework of barotropic Euler system when the bottom is flat.
Next, recall that from section~\ref{ssec: scheme_truncation}, if $\theta$ is chosen to be like $\cO(\Fr)$ when $\Fr$ goes to $0$, the truncation erro of the numerical scheme is uniform with respect to $\Fr$.
At last, assuming that the time step $\dt$ is such that the CFL conditions associated to the acoustic step
\[ \dt \underset{1\leq j \leq N}{\max}\left(\tau_j^n \underset{k\in\cN(j)}{\max} \sigma_{jk} a_{jk}\right) \leq \frac{1}{2}, \]
and to the transport step
\[ \dt \underset{1\leq j \leq N}{\max}\left(\underset{k\in\cN(j),u_{jk}^{n} < 0}{\sum} \sigma_{jk} \lvert u_{jk}^{n} \rvert \right) \leq 1, \]
hold true, the water height $h_j^n$ is positive for all $j$ and $n>0$ provided that $h_j^0$ is positive for all $j$, for the time-explicit scheme corresponding to $\sharp=n$.
Indeed, notice that $L_j^n$ turns out to be positive while the transport step correspond to a convex combination of states at time $t^{n+1-}$.

As far as the mixed implicit-explicit scheme corresponding to $\sharp=n+1-$ is concerned, the same properties hold true under the transport CFL condition
\[ \dt \underset{1\leq j \leq N}{\max}\left(\underset{k\in\cN(j),{u}_{jk}^{n+1-} < 0}{\sum} \sigma_{jk} \lvert {u}_{jk}^{n+1-} \rvert \right) \leq 1. \]
Notice that the acoustic step is implicit and therefore is free of CFL condition.

Now we show that the schemes are well-balanced.
We begin with the explicit scheme $\sharp=n$.

\begin{prop}
The full explicit scheme ($\sharp=n$) is well-balanced on 2D unstructured mesh in the sense that ($h_j^0+z_j=H=\text{cste}$ and $\bu_j^0=\mathbf{0}$) $\Longrightarrow$ ($h_j^n+z_j=\text{cste}$ and $\bu_j^n=\mathbf{0}$) $\forall j$.
\end{prop}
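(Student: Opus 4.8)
The plan is to argue by induction on $n$, the only substantive point being that a single time step preserves the lake at rest state. So I would assume that at some level $n$ we have $\bu_j^n=\mathbf{0}$ and $h_j^n+z_j=H$ for all $j$, and show the same holds at level $n+1$. Since $\sharp=n$ the scheme is fully explicit, so every flux quantity is built from the data at time $t^n$; I would process the acoustic step \eqref{eq: scheme_2D_acoustic} first and then feed its output into the transport step \eqref{eq: scheme_2D_transport}.

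First I would evaluate the interface velocity $u_{jk}^{n}$ under the hypothesis. Because $\bu^n\equiv\mathbf{0}$, the average term drops and
\[
u_{jk}^{n} = -\frac{1}{2a_{jk}}\Big[(\Pi_k^n-\Pi_j^n)+\{gh\Delta z\}_{jk}^n\Big].
\]
Using $\Pi^n=gh^2/2$ and factorising, the bracket equals $\tfrac{g}{2}(h_j^n+h_k^n)\big[(h_k^n+z_k)-(h_j^n+z_j)\big]$, which vanishes since $h_j^n+z_j=h_k^n+z_k=H$. Hence $u_{jk}^n=0$ on every face, and \eqref{eq: scheme_2D_acoustic_a} immediately yields $\tau_j^{n+1-}=\tau_j^n$, i.e. $h_j^{n+1-}=h_j^n$.

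The crux, which I expect to be the main obstacle, is the momentum update \eqref{eq: scheme_2D_acoustic_b}: I must show $\sum_{k\in\cN(j)}\sigma_{jk}\,\Pi_{jk}^{\sharp,\theta}\bn_{jk}=\mathbf{0}$. With $\bu^n\equiv\mathbf{0}$ the diffusive term in $\Pi_{jk}^{\sharp,\theta}$ disappears, leaving $\Pi_{jk}^{\sharp,\theta}=\tfrac12(\Pi_j^n+\Pi_k^n)+\tfrac12\{gh\Delta z\}_{jk}^n$. Here I would invoke the geometric identity $\sum_{k\in\cN(j)}\sigma_{jk}\bn_{jk}=\mathbf{0}$, which holds because the $\sigma_{jk}$ are proportional to the edge lengths $|\Gamma_{jk}|/|\Omega_j|$ and $\Omega_j$ is a closed polygon; this discards the $\tfrac12\Pi_j^n$ contribution and leaves $\tfrac12\sum_k\sigma_{jk}\big[\Pi_k^n+\{gh\Delta z\}_{jk}^n\big]\bn_{jk}$. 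The decisive algebraic step is then to rewrite the equilibrium relation as $z_k-z_j=h_j^n-h_k^n$, which turns $\{gh\Delta z\}_{jk}^n$ into $\tfrac{g}{2}\big((h_j^n)^2-(h_k^n)^2\big)$ and collapses the bracket to $\Pi_k^n+\tfrac{g}{2}\big((h_j^n)^2-(h_k^n)^2\big)=\tfrac{g}{2}(h_j^n)^2=\Pi_j^n$, which is independent of $k$. The surviving sum is thus $\tfrac12\Pi_j^n\sum_k\sigma_{jk}\bn_{jk}=\mathbf{0}$ by the same identity, giving $\bu_j^{n+1-}=\mathbf{0}$. This combined geometric/algebraic cancellation is exactly where the centred pressure and the specific source discretisation $\{gh\Delta z\}_{jk}^n$ are tailored so that the discrete pressure gradient balances the discrete topography term at equilibrium.

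Finally I would close the loop through the transport step. Since every $u_{jk}^\sharp=u_{jk}^n=0$, both sums in \eqref{eq: scheme_2D_transport} vanish and $\varphi_j^{n+1}=\varphi_j^{n+1-}$ for $\varphi\in\{h,h\bu\}$. Together with $h_j^{n+1-}=h_j^n$ and $\bu_j^{n+1-}=\mathbf{0}$ this gives $h_j^{n+1}=h_j^n$ and $(h\bu)_j^{n+1}=\mathbf{0}$, hence $\bu_j^{n+1}=\mathbf{0}$ and $h_j^{n+1}+z_j=h_j^n+z_j=H$. This completes the induction step and therefore the proof.
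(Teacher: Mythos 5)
Your proof is correct and follows essentially the same route as the paper: you show that the equilibrium hypothesis makes the source discretisation satisfy $\{gh\Delta z\}_{jk}^n=\Pi_j^n-\Pi_k^n$, so that $u_{jk}^n=0$ and the pressure flux collapses to the cell-constant value $\Pi_j^n$, after which the geometric identity $\sum_{k\in\cN(j)}\sigma_{jk}\bn_{jk}=\mathbf{0}$ kills the momentum update and the transport step is trivial. The only cosmetic differences are that you make the induction over $n$ and the closed-polygon identity explicit (the paper states only the step $t^0\to t^1$ and uses the identity silently), which if anything makes the argument slightly more complete.
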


\begin{proof}
Assume that $h_j^0+z_j=H=\text{cste}$ and $\bu_j^0=\mathbf{0}$ $\forall j$.
We have
\begin{align*}
\left\{gh\Delta z\right\}_{jk}^0 &= g \frac{h_j^0+h_k^0}{2} \left((H-h_k^0) - (H-h_j^0)\right) = \Pi_j^0 - \Pi_k^0,
\\
u_{jk}^{0} &= - \frac{1}{2 a_{jk} } \left(\Pi_k^0-\Pi_j^0\right) - \frac{1}{2a_{jk}} \left\{gh\Delta z\right\}_{jk}^0 = 0,
\\
\Pi_{jk}^{0,\theta} &= \frac{1}{2} \left(\Pi_j^0 + \Pi_k^0\right) + \frac{1}{2} \left\{gh\Delta z\right\}_{jk}^0 = \Pi_j^0.
\end{align*}

Injecting those values in the acoustic step gives:
\begin{equation*}
\left\{
\begin{aligned}
\bu^{1-}_j &= \bu^0_j - \tau_j^0 \dt \sum_{k\in \cN(j)} \sigma_{jk}\, \Pi_j^0 \bn_{jk}
= - \tau_j^0 \Pi_j^0 \dt \sum_{k\in \cN(j)} \sigma_{jk}\, \bn_{jk} = \mathbf{0},
\\
\Pi^{1-}_j &= \Pi^0_j,
\\
\tau^{1-}_j &= \tau^0_j.
\end{aligned}
\right.
\end{equation*}

Next, since $u_{jk}^{0}=0$, $\forall j,k$, the transport step is trivial and the variables $h$ and $h\bu$ are unchanged at time $t^1$.
\end{proof}

\begin{prop}
The implicit-explict scheme ($\sharp=n+1-$) is well-balanced on 2D unstructured mesh in the sense that ($h_j^0+z_j=H=\text{cste}$ and $\bu_j^0=\mathbf{0}$) $\Longrightarrow$ ($h_j^n+z_j=\text{cste}$ and $\bu_j^n=\mathbf{0}$) $\forall j$.
\end{prop}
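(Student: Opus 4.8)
The plan is to reduce the implicit case to the explicit computation by an ansatz-plus-uniqueness argument, because here the fluxes $u_{jk}^{n+1-}$ and $\Pi_{jk}^{n+1-,\theta}$ depend on the unknown acoustic state at time $t^{n+1-}$ and can no longer be evaluated directly as in the previous proposition. Reasoning by induction on $n$, I would assume $h_j^n + z_j = H$ and $\bu_j^n = \mathbf{0}$ for all $j$, and then claim that the lake at rest state is itself a solution of the implicit acoustic system~\eqref{eq: scheme_2D_acoustic}, namely $\tau_j^{n+1-} = \tau_j^n$, $\bu_j^{n+1-} = \mathbf{0}$ and $\Pi_j^{n+1-} = \Pi_j^n$.

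First I would substitute this ansatz into the flux definitions. The crucial point is that the source contribution $\left\{gh\Delta z\right\}_{jk}^n$ is frozen at $t^n$ even in the implicit scheme, so the identity $\left\{gh\Delta z\right\}_{jk}^n = \Pi_j^n - \Pi_k^n$, which follows from the inductive hypothesis $h_j^n + z_j = H$ exactly as in the explicit proof, still holds. Injecting $\bu^{n+1-} = \mathbf{0}$ and $\Pi^{n+1-} = \Pi^n$ then yields $u_{jk}^{n+1-} = 0$ and $\Pi_{jk}^{n+1-,\theta} = \Pi_j^n$ just as before; the $\theta_{jk}$ correction is inert because it multiplies the velocity jump $\bn_{jk}^T(\bu_k^{n+1-} - \bu_j^{n+1-})$, which vanishes at equilibrium. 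Plugging these back into the three update equations, and using the closed-cell identity $\sum_{k\in\cN(j)}\sigma_{jk}\bn_{jk} = \mathbf{0}$ in the momentum line, confirms that the ansatz indeed satisfies all of~\eqref{eq: scheme_2D_acoustic}.

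The step I expect to be the main obstacle is arguing that this is the state actually selected by the implicit solver, i.e. uniqueness. My plan is to observe that, because the Suliciu relaxation has been linearised (the EOS enters only through the initialisation $\Pi(\cdot,t^n) = p^{\EOS}(\tau(\cdot,t^n))$ and the relaxed system is solved with $\lambda = 0$), and because the coefficients $a_{jk}$ and the source term $\left\{gh\Delta z\right\}_{jk}^n$ are all frozen at $t^n$, the implicit acoustic update is a linear system in the unknowns $(\tau_j^{n+1-}, \bu_j^{n+1-}, \Pi_j^{n+1-})_j$. I would then invoke the well-posedness of this backward-Euler discretisation of the linearly degenerate, dissipative relaxation system~\eqref{eq: splitting_relaxation2} to get invertibility and hence uniqueness. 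The subtlety to verify is that the definition of $\theta_{jk}$, being built from velocities, does not reintroduce a nonlinearity that breaks this; this is resolved by noting that, near the equilibrium, the velocity jumps vanish so the $\theta$ term contributes nothing to the linear operator acting on the increment.

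Finally, once the acoustic state is shown to be unchanged, the transport step is trivial: since $u_{jk}^{n+1-} = 0$ for all $j$ and $k$, the transport update~\eqref{eq: scheme_2D_transport} (equivalently the global formula~\eqref{eq: scheme_2D_global}, where the surviving $\Pi_{jk}^{n+1-,\theta}\bn_{jk}$ momentum term sums to zero by the same closed-cell identity) leaves $h$ and $h\bu$ unchanged. This gives $h_j^{n+1} + z_j = H$ and $\bu_j^{n+1} = \mathbf{0}$, which closes the induction and establishes the claim.
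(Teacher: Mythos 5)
Your proposal is correct and follows essentially the same route as the paper: verify that the lake-at-rest state solves the implicit acoustic system (using the source term frozen at $t^n$, which gives $\left\{gh\Delta z\right\}_{jk}^n = \Pi_j^n - \Pi_k^n$, and the closed-cell identity $\sum_{k\in\cN(j)}\sigma_{jk}\bn_{jk}=\mathbf{0}$ for the momentum update), appeal to uniqueness of the solution of the coupled system in $(\bu,\Pi)$, and conclude with the trivial transport step since $u_{jk}^{n+1-}=0$. The only difference is one of emphasis: the paper simply asserts that $(\mathbf{0},\Pi_j^n)$ is the \emph{only} solution of the coupled system, whereas you sketch a justification via linearity of the frozen-coefficient implicit update --- and the cleanest way to dispose of your worry about $\theta_{jk}$ is to note that in practice it is built from time-$t^n$ data, $\theta_{jk}=\min\left(\lvert u_{jk}^{n}\rvert/\max(c_j,c_k),1\right)$, so it never makes the implicit system nonlinear.
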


\begin{proof}
With the same calculus as in the explicit case, we can verify that the vector $(\bu_j^{1-},\Pi_j^{1-}) = (\mathbf{0},\Pi_j^0)$ is the only solution of the coupled system over $(\bu,\Pi)$.
Indeed we have in this case
\begin{align*}
\left\{gh\Delta z\right\}_{jk}^0 &= g \frac{h_j^0+h_k^0}{2} \left((H-h_k^0) - (H-h_j^0)\right) = \Pi_j^0 - \Pi_k^0,
\\
u_{jk}^{1-} &= - \frac{1}{2 a_{jk} } \left(\Pi_k^{1-}-\Pi_j^{1-}\right) - \frac{1}{2a_{jk}} \left\{gh\Delta z\right\}_{jk}^0 = 0,
\\
\Pi_{jk}^{1^-,\theta} &= \frac{1}{2} \left(\Pi_j^{1-} + \Pi_k^{1-}\right) + \frac{1}{2} \left\{gh\Delta z\right\}_{jk}^0 = \Pi_j^0.
\end{align*}
and therefore
\begin{equation*}
\left\{
\begin{aligned}
\bu^{1-}_j &= \bu^0_j - \tau_j^0 \dt \sum_{k\in \cN(j)} \sigma_{jk}\, \Pi_{jk}^{1^-,\theta} \bn_{jk}
= - \tau_j^0 \Pi_j^0 \dt \sum_{k\in \cN(j)} \sigma_{jk}\, \bn_{jk} = \mathbf{0},
\\
\Pi^{1-}_j &= \Pi^0_j.
\end{aligned}
\right.
\end{equation*}
Then we easily get $\tau^{1-}_j = \tau^0_j$ so that the Lagrangian step is well-balanced.
As before, we can conclude that the transport step is also well-balanced since $u_{jk}^{1-}=0$, $\forall j,k$.
\end{proof}
\section{Numerical experiments}

We present several test cases that aim at testing our scheme against classical flow configurations on unstructured meshes and also in the low Froude regime.

In the following, the EXEX scheme will refer to the full explicit scheme ($\sharp=n$) with the time step $\Delta t$ defined by
\[ \dt_{\exex} = \frac{\KCFL}{2 \max_j \left(\frac{\sum_{k\in\cN(j)} \lvert \Gamma_{jk} \rvert}{\rvert \Omega_j \rvert} \max_{k\in\cN(j)} \max(v_{jk}^{\acou},v_{jk}^{\trans}) \right)}, \]
where $\KCFL = 0.9$, $v_{jk}^{\acou} = \tau_j a_{jk}$, $a_{jk} = 1.01\max(h_j c_j,h_k c_k)$, $c_j=\sqrt{gh_j}$ and $v_{jk}^{\trans} = \lvert u_{jk}^{n} \rvert$.

The IMEX scheme will refer to the implicit-explicit scheme ($\sharp=n+1-$) with the time step $\Delta t$ defined by
\[ \dt_{\imex} = \frac{\KCFL}{2 \max_j \left(\frac{\sum_{k\in\cN(j)} \lvert \Gamma_{jk} \rvert}{\rvert \Omega_j \rvert} \max_{k\in\cN(j)} (v_{jk}^{\trans}) \right)} \]
where $v_{jk}^{\trans} = \lvert u_{jk}^{n+1-} \rvert$.
Thus, the time step of the IMEX scheme is not constrained by the acoustic waves.

Except if otherwise stated, we will always use the corrected numerical fluxes with
\[ \theta_{jk} = \min \left(\frac{\lvert u^n_{jk} \rvert}{\max\left(c_j,c_k\right)},1\right), \]
so that $\theta$ approximates a local Froude number on every edge.
On the other hand, for the sake of comparison we take $\theta = 1$ on every edge for the fluxes without correction.

\subsection{Test of the well-balanced property}\label{ssec: results_WB}
In order to test the well-balanced property of the scheme, we first consider the following lake at rest initial condition:
\begin{align*}
h(x,y,0) &= H - z(x,y),
\\
\bu(x,y,0) &= \mathbf{0},
\end{align*}
where $H = 0.5$ is constant and the topography $z$ is a smooth bump defined by
\begin{equation}\label{eq: results_WB_z}
z(x,y) = 0.3 
\times
\begin{cases}
0.5 \exp({2-\frac{0.1}{x-0.325}}), & \text{ if } 0.325 < x \leq 0.375, \\
1 - 0.5 \exp({2-\frac{0.1}{0.425-x}}), & \text{ if } 0.375 < x < 0.425, \\
1, & \text{ if } 0.425 \leq x \leq 0.575, \\
1 - 0.5 \exp({2-\frac{0.1}{x-0.575}}), & \text{ if } 0.575 < x < 0.625, \\
0.5 \exp({2-\frac{0.1}{0.675-x}}), & \text{ if } 0.625 \leq x < 0.675, \\
0 & \text{ otherwise.}
\end{cases}
\end{equation}

The physical domain $\left[0,1\right] \times \left[0,1\right]$ is discretized over a 20\,000-cell triangular mesh.
We impose Neumann boundary conditions and we observe the solution at final time $T_f = 0.1$.

For both EXEX and IMEX schemes, the errors between the numerical and the exact solution, which is also the initial stationary condition, are machine epsilon as we can observe in Table~\ref{tab: results_WB_property}.

\renewcommand{\arraystretch}{1.5}
\setlength{\tabcolsep}{0.5cm}
\begin{table}[t]
\begin{center}
\begin{tabular}{|c|c|c|}
\hline
		& $\lVert \rho - 0.5 \rVert_{\infty}$	& $\lVert \bu \rVert_{\infty}$	\\
\hline
EXEX	& $2.6 \, 10^{-16}$						& $1.3 \, 10^{-13}$				\\
\hline
IMEX 	& $2.6 \, 10^{-16}$						& $3.9 \, 10^{-8}$				\\
\hline
\end{tabular}
\end{center}
\caption{Well-balanced property. Errors for EXEX and IMEX schemes.}
\label{tab: results_WB_property}
\end{table}

\subsection{Planar dam break test problem}
We are interested in the behaviour of our schemes with regard to the propagation of a rarefaction wave and a shock wave.
We use the same triangular mesh, boundary conditions and $T_f$ value as in section~\ref{ssec: results_WB}.
The topography is also kept identical to the one given in~\eqref{eq: results_WB_z}, the velocity initialized to zero and the initial total water height $H = h+z$ is defined as follows:
\[ H(x,y,0) = 
\begin{cases}
0.5 &\text{if $x \leq 0.5$,} \\
1 &\text{otherwise.}
\end{cases} \]

In Figure~\ref{fig: results_dambreak} we present the results for both the EXEX and IMEX schemes.
We have performed a cut of the solution along the  $y=0.5$ axis and compared it with the one computed by a genuine 1D code with a 200-cell uniform grid.
We can observe that for both EXEX and IMEX the results of the 2D simulations are in agreement with the 1D results although they were computed with an unstructured mesh.
It is worth noting that the 2D simulation manages to fairly preserve the planar structure of the approximate solutions.

\begin{figure}[t]
\begin{center}
\includegraphics[width=0.7\linewidth, trim={150pt 50pt 300pt 50pt}, clip]{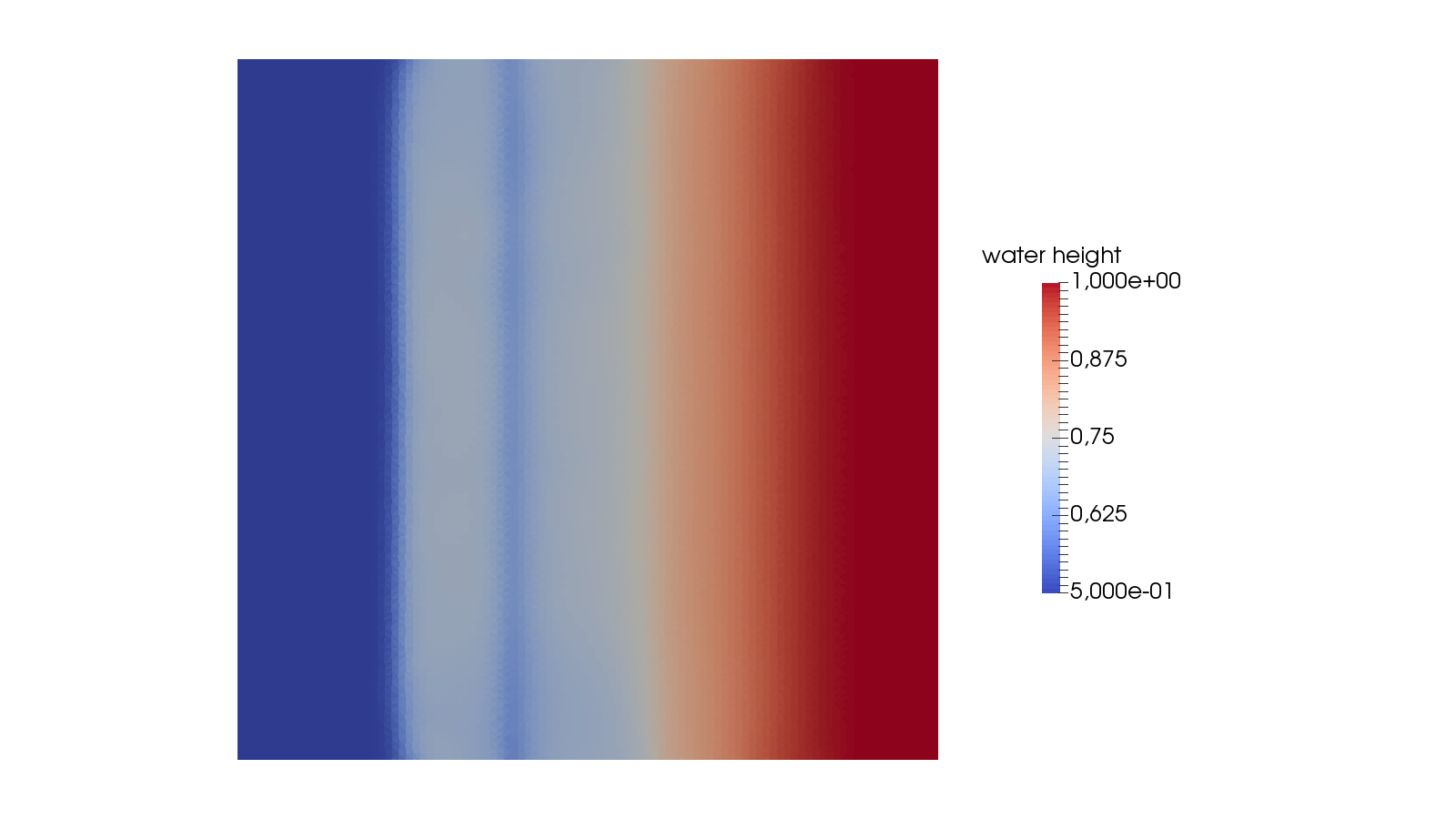}
\includegraphics[width=0.8\linewidth, trim={5pt 5pt 0pt 5pt}, clip]{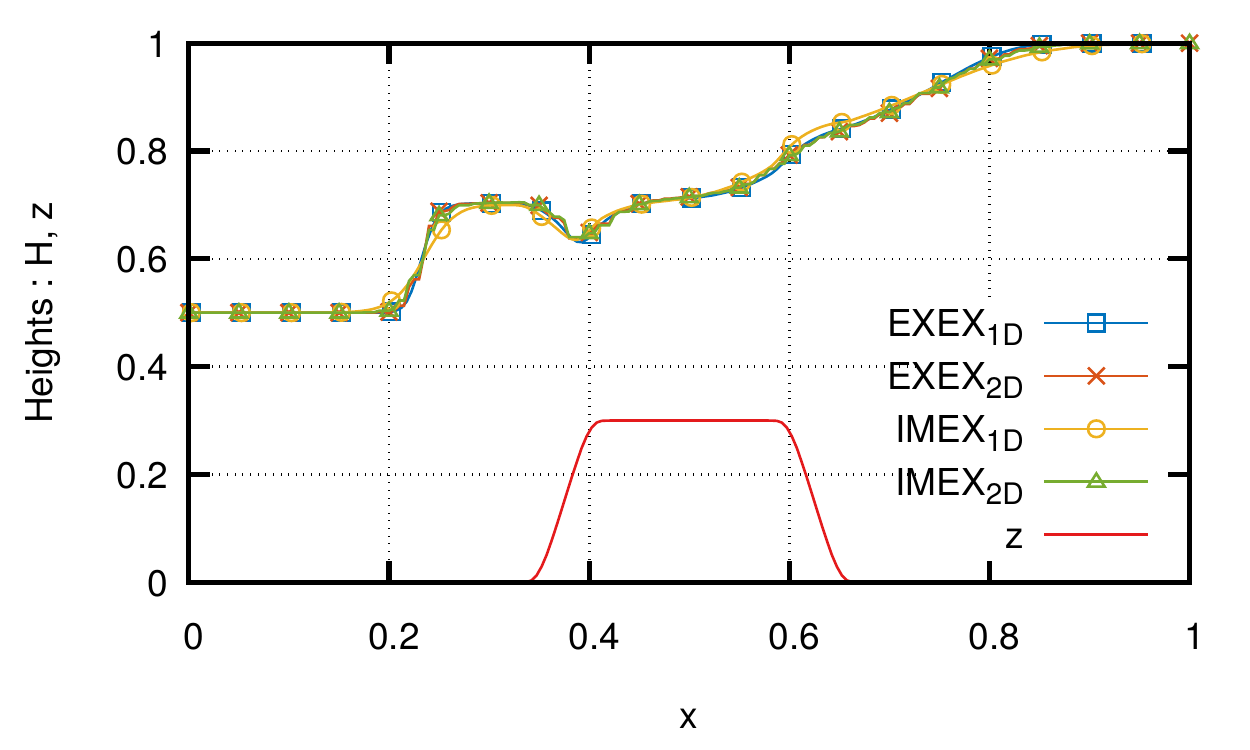}
\end{center}
\caption{Dam break test case at $T_f=0.1$: mapping of the total water height with the IMEX scheme (top) and profile of $H$ and $z$ along the $y=0.5$ axis  obtained with both the EXEX and IMEX with 2D and 1D simulations (bottom).}
\label{fig: results_dambreak}
\end{figure}

\subsection{Traveling vortex with flat bottom}\label{sec: results_vortex_flat_bottom}
In order to challenge our schemes with low Froude regimes, we consider a traveling vortex as in \cite{BALMN14}. 
The exact solution of this test is detailed in~\cite{RB09}.
For this test case we consider a flat bottom and we use a regular cartesian mesh of $160\times 160$ cells that discretizes the physical domain $\left[0,1\right] \times \left[0,1\right]$.
The boundary conditions imposed are periodic along the $x$-direction and absorbing boundaries along the $y$-direction.
The initial conditions are given by:
\begin{align*}
h(x,y,0) &= 110 +
\begin{cases}
\frac{\Gamma^2}{g\omega^2}\left(k(\omega r_c)-k(\pi)\right) &\text{if $\omega r_c \leq \pi$,} \\
0 &\text{otherwise,}
\end{cases}
\\
u(x,y,0) &= 0.6 +
\begin{cases}
\Gamma \left(1+\cos(\omega r_c)\right)\left(0.5-y\right) &\text{if $\omega r_c \leq \pi$,} \\
0 &\text{otherwise,}
\end{cases}
\\
v(x,y,0) &= 0 +
\begin{cases}
\Gamma \left(1+\cos(\omega r_c)\right)\left(x-0.5\right) &\text{if $\omega r_c \leq \pi$,} \\
0 &\text{otherwise,}
\end{cases}
\end{align*}
where
\[ r_c = \lVert \bx-(0.5,0.5)\rVert, \quad \Gamma = 15.0, \quad \omega = 4\pi, \]
and
\[ k(r) = 2\cos(r)+2r\sin(r)+\frac{1}{8}\cos(2r)+\frac{r}{4}\sin(2r)+\frac{3}{4}r^2. \]
Due to the periodic boundary conditions, the exact solution is periodic with period $T=\frac{5}{3}$ and given at any time $t>0$ by:
\begin{align*}
h(x,y,t) &= h(x-t/T,y,0),\\
u(x,y,t) &= u(x-t/T,y,0),\\
v(x,y,t) &= v(x-t/T,y,0).\\
\end{align*}

We present the results of both the EXEX and IMEX schemes, with ($\theta=\cO(\Fr)$) and without correction ($\theta=1$) using $\epsilon=0.05$.
The mapping of the velocity magnitude is displayed in Figure~\ref{fig: results_vortex_flat_bottom} and we can observe that the accuracy of the solution is really improved by the low-Froude correction.
Furthermore, the accuracy of the solution between the EXEX and the IMEX scheme with low-Froude correction is comparable whereas it took about 100 times less time steps and 10 times less CPU time computation to reach the final time with the IMEX than with the EXEX scheme as we can see in Table~\ref{tab: results_vortex_flat_bottom}.

\renewcommand{\arraystretch}{1.5}
\setlength{\tabcolsep}{0.5cm}
\begin{table}[t]
\begin{center}
\begin{tabular}{|c|c|c|}
\hline
		& Number of time steps	& CPU time	\\
\hline
EXEX	& 60264			& 1930		\\
\hline
IMEX	& 689			& 175		\\ 
\hline
\end{tabular}
\end{center}
\caption{Traveling vortex test case with flat bottom. Numbers of iterations and CPU times with low-Froude correction.}
\label{tab: results_vortex_flat_bottom}
\end{table}

\begin{figure}[t]
\begin{center}
\begin{minipage}{.84\textwidth}
\includegraphics[trim={110pt 60pt 450pt 60pt}, clip, width=0.326\textwidth]{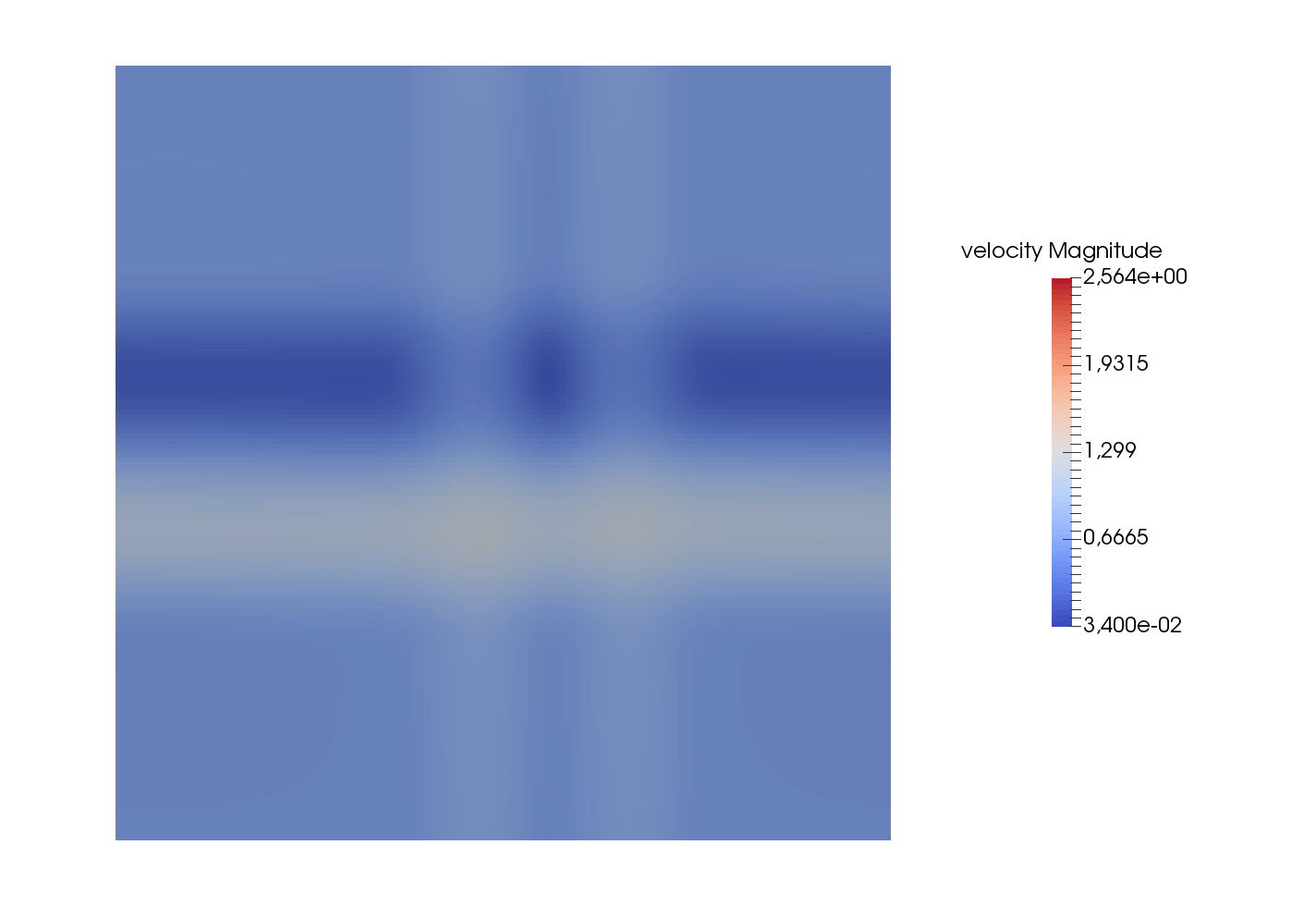}
\hfill
\includegraphics[trim={110pt 60pt 450pt 60pt}, clip, width=0.326\textwidth]{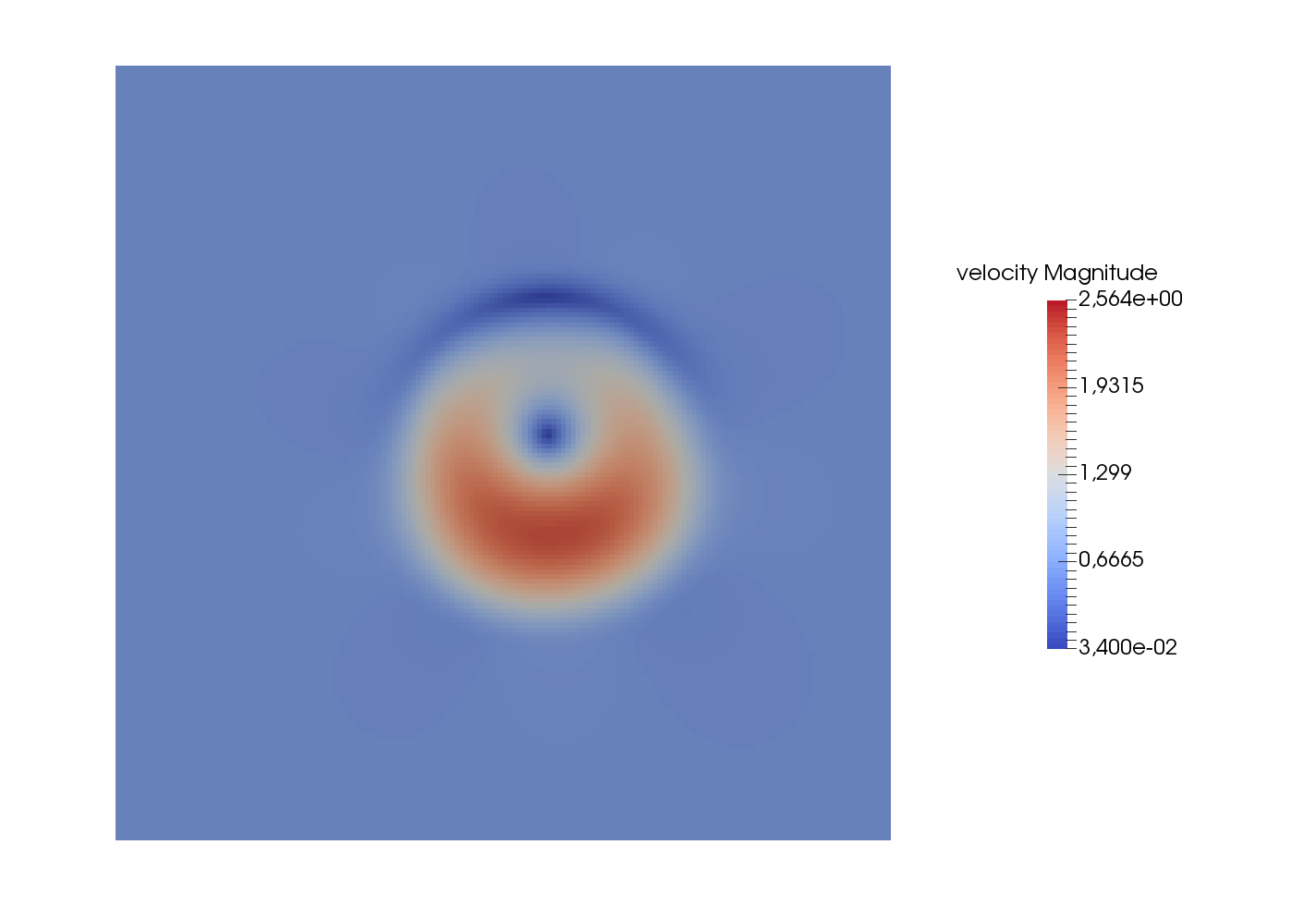}
\hfill
\includegraphics[trim={110pt 60pt 450pt 60pt}, clip, width=0.326\textwidth]{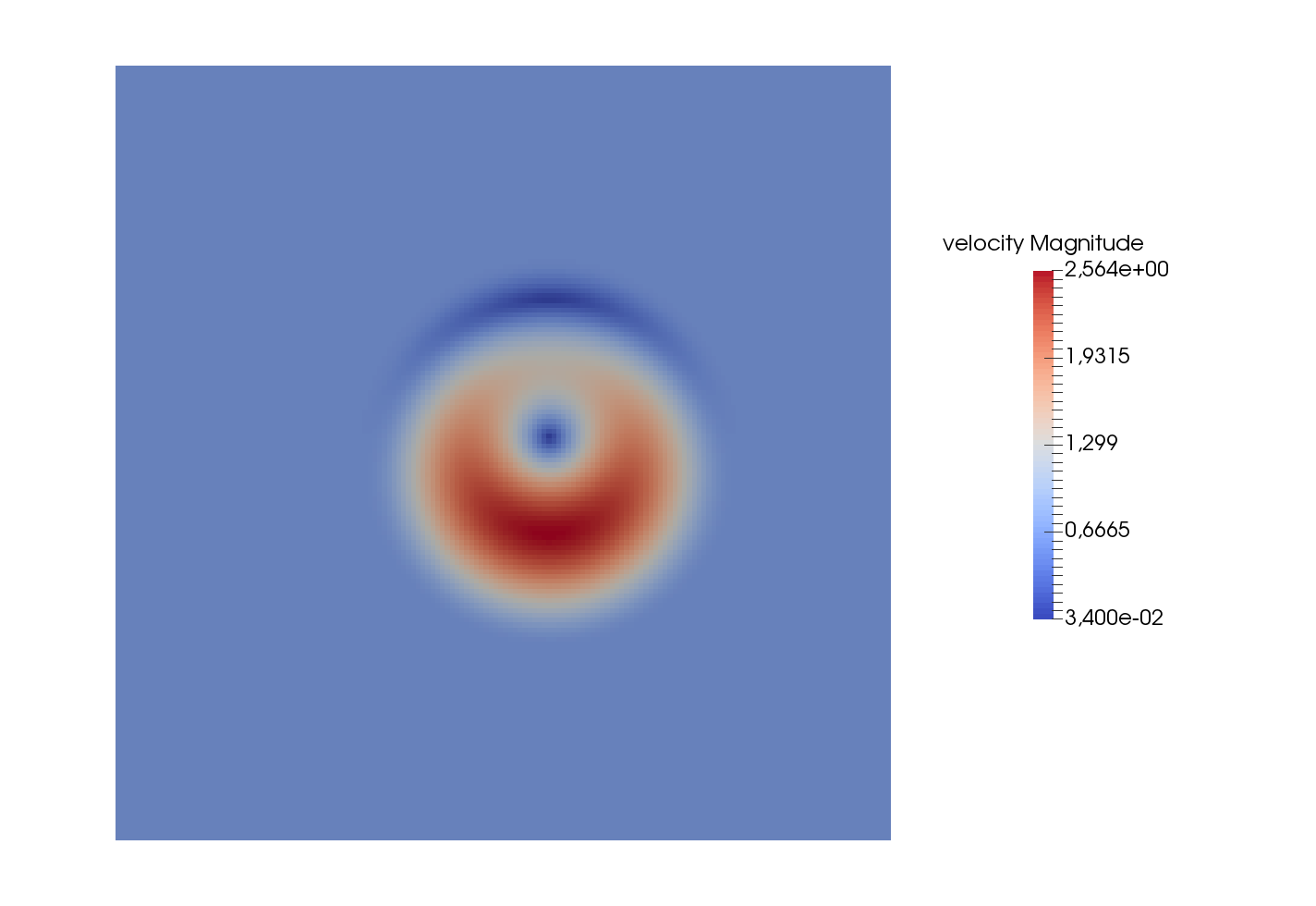}

\includegraphics[trim={110pt 60pt 450pt 60pt}, clip, width=0.326\textwidth]{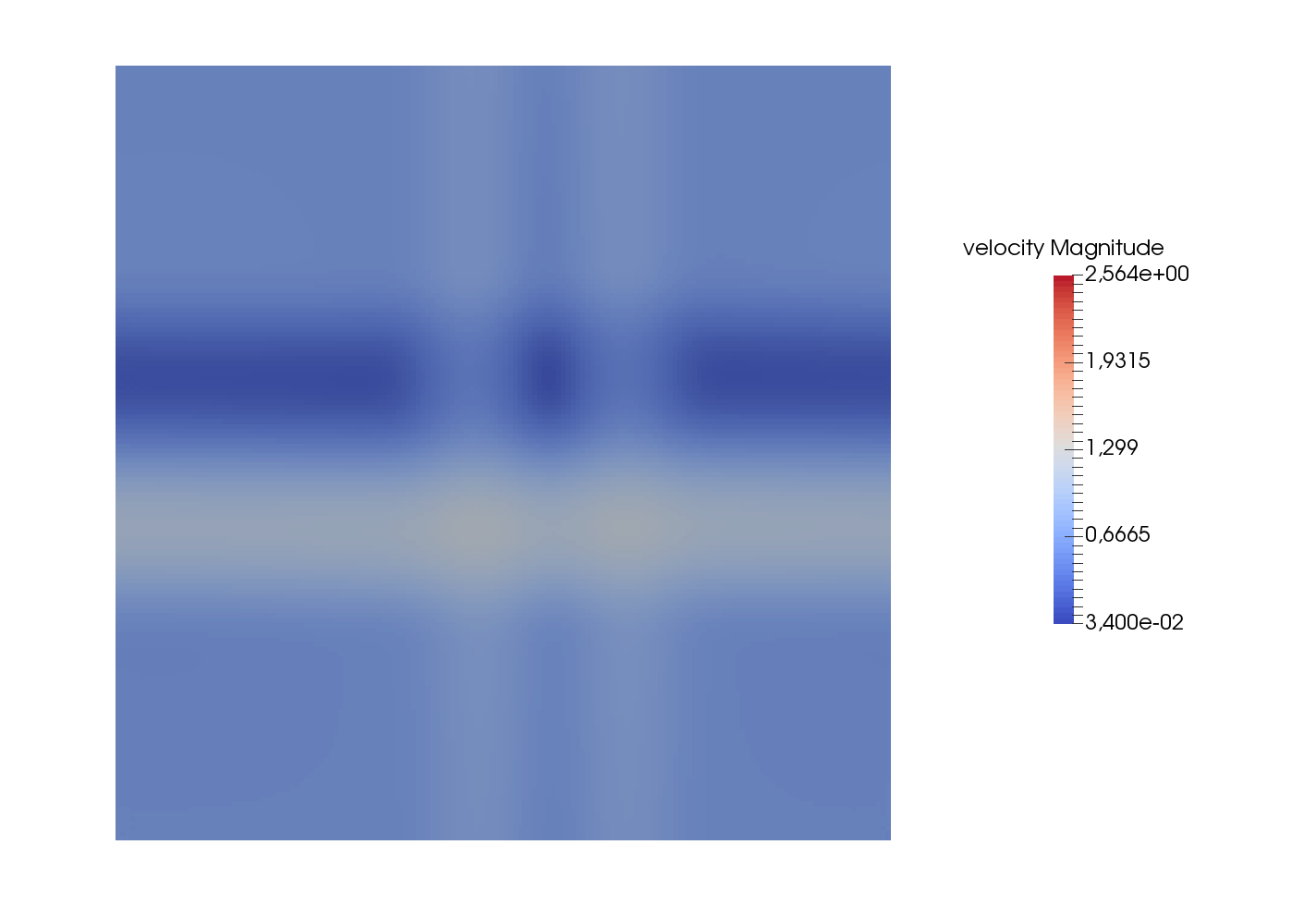}
\hfill
\includegraphics[trim={110pt 60pt 450pt 60pt}, clip, width=0.326\textwidth]{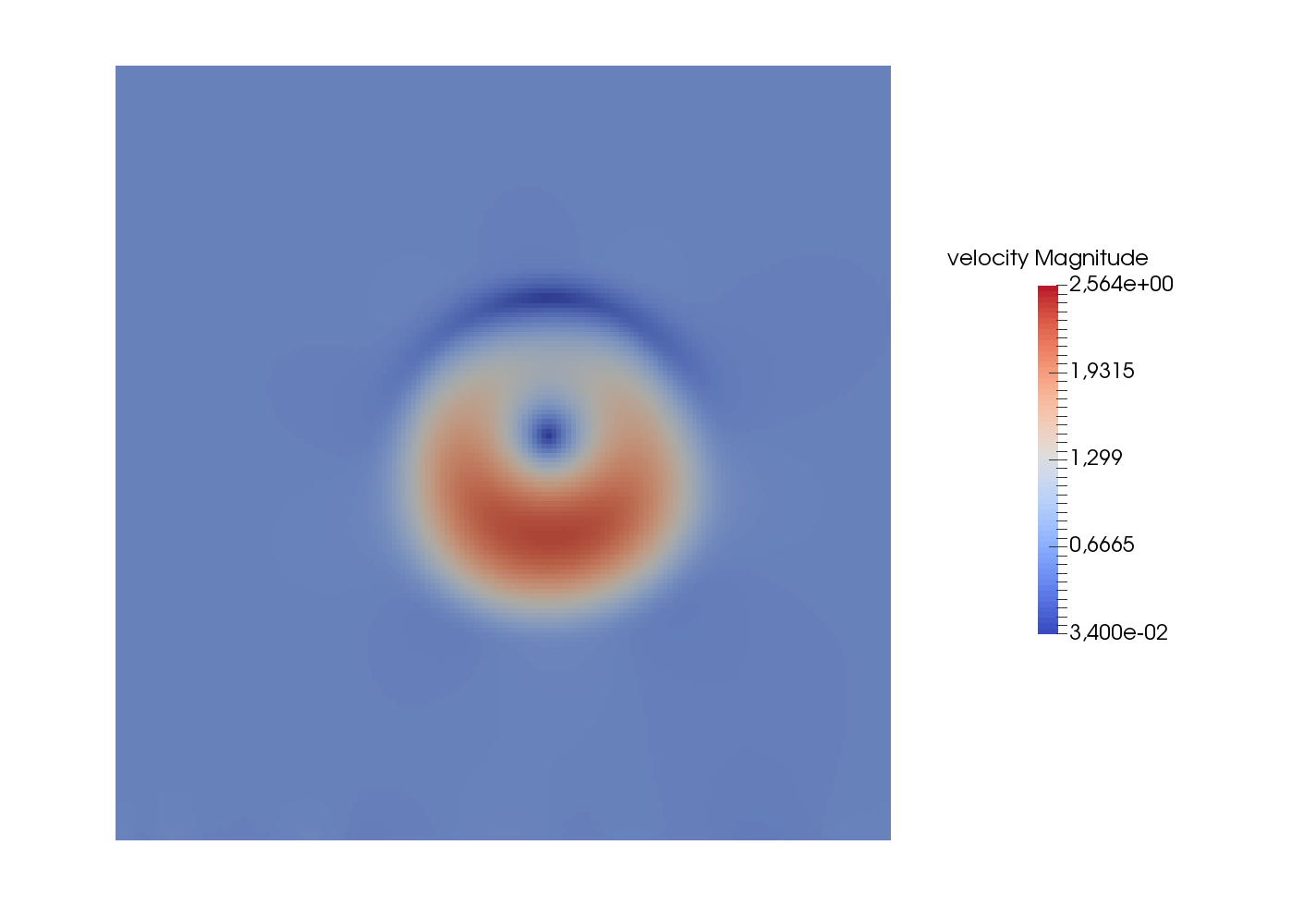}
\hfill
\includegraphics[trim={110pt 60pt 450pt 60pt}, clip, width=0.326\textwidth]{vortex_Exact.png}
\end{minipage}
\hfill
\begin{minipage}[c]{.151\textwidth}
\includegraphics[trim={1020pt 290pt 150pt 250pt}, clip, width=0.987\textwidth]{vortex_Exact.png}
\end{minipage}
\end{center}
\caption{Traveling vortex test case with flat bottom. Mapping of the velocity magnitude at $T_f=0.1$ obtained with the EXEX scheme (top) and the IMEX scheme (bottom). We used the values $\theta=1$ (left) and $\theta=\cO(\Fr)$ (center). The right column displays the exact solution.}
\label{fig: results_vortex_flat_bottom}
\end{figure}

\subsection{Traveling vortex with non-flat bottom}
We extend the physical domain of the traveling vortex test above to the rectangle $\left[0,2\right] \times \left[0,1\right]$.
The boundary conditions and initial conditions for $h$ and $u$ are the same as in section~\ref{sec: results_vortex_flat_bottom}. 
However we consider here a topography defined by $z(x,y) = 10 \exp\left(-5(x-1)^2-50(y-0.5)^2\right)$ following the idea of~\cite{BALMN14}.

We do not have exact analytical solution because of the non-flat bottom but we still can compare in figure~\ref{fig: results_vortex_non_flat_bottom} the results between EXEX and IMEX schemes, with or without low Froude correction.
Here again, the vortex structure of the flow is completely destroyed by numerical diffusion without low-Froude corrections $\theta=\cO(\Fr)$, with both schemes.
The mapping of the Froude number is not given here, but is similar to the one of the velocity magnitude, with a range of values from $1.6\cdot 10^{-3}$ to $1.1\cdot 10^{-2}$.
Finally, we can remark that the EXEX scheme took about 20 times more iterations and 15 times more CPU times than the IMEX scheme, both with low-Froude correction, as we can see in Table~\ref{tab: results_vortex_non_flat_bottom}.

\renewcommand{\arraystretch}{1.5}
\setlength{\tabcolsep}{0.5cm}
\begin{table}[t]
\begin{center}
\begin{tabular}{|c|c|c|}
\hline
		& Nb time steps	& CPU time	\\
\hline
EXEX	& 60264			& 15748		\\
\hline
IMEX	& 2733			& 921		\\ 
\hline
\end{tabular}
\end{center}
\caption{Traveling vortex test case with non-flat bottom. Numbers of iterations and CPU times with low-Froude correction.}
\label{tab: results_vortex_non_flat_bottom}
\end{table}

\begin{figure}[t]
\begin{center}
\begin{minipage}{.84\textwidth}
\includegraphics[trim={70pt 250pt 400pt 250pt}, clip, width=0.497\textwidth]{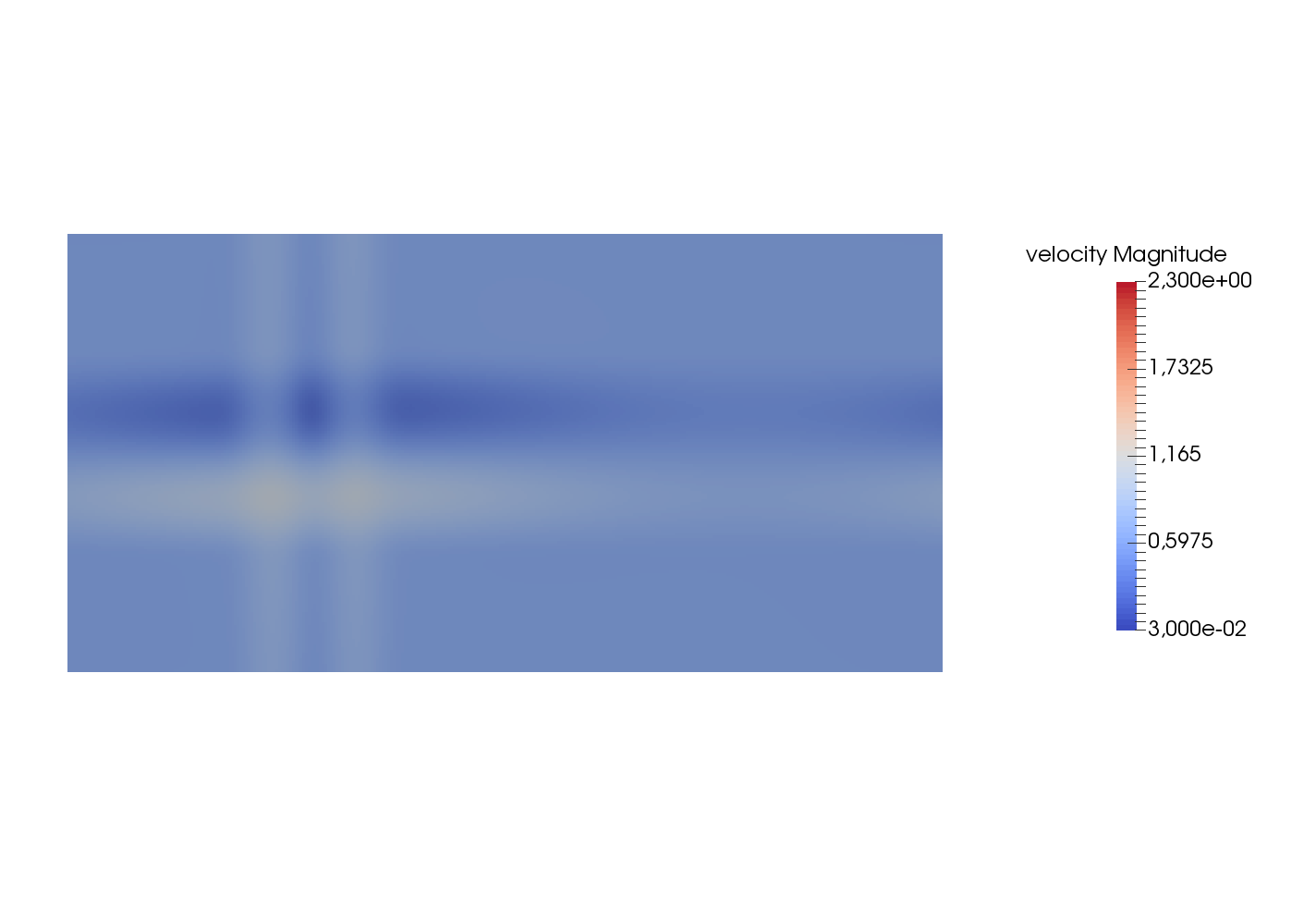}
\hfill
\includegraphics[trim={70pt 250pt 400pt 250pt}, clip, width=0.497\textwidth]{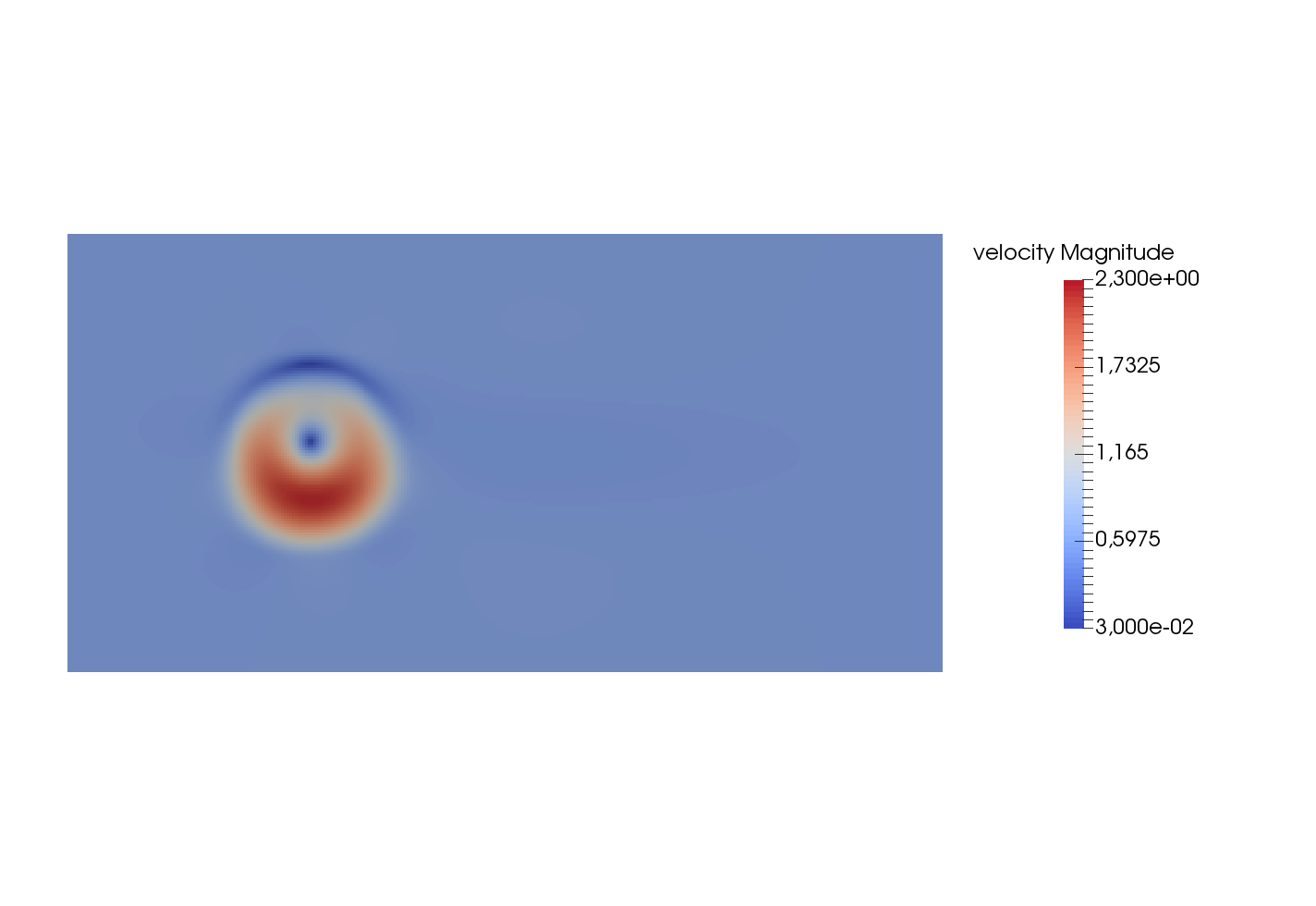}

\includegraphics[trim={70pt 250pt 400pt 250pt}, clip, width=0.497\textwidth]{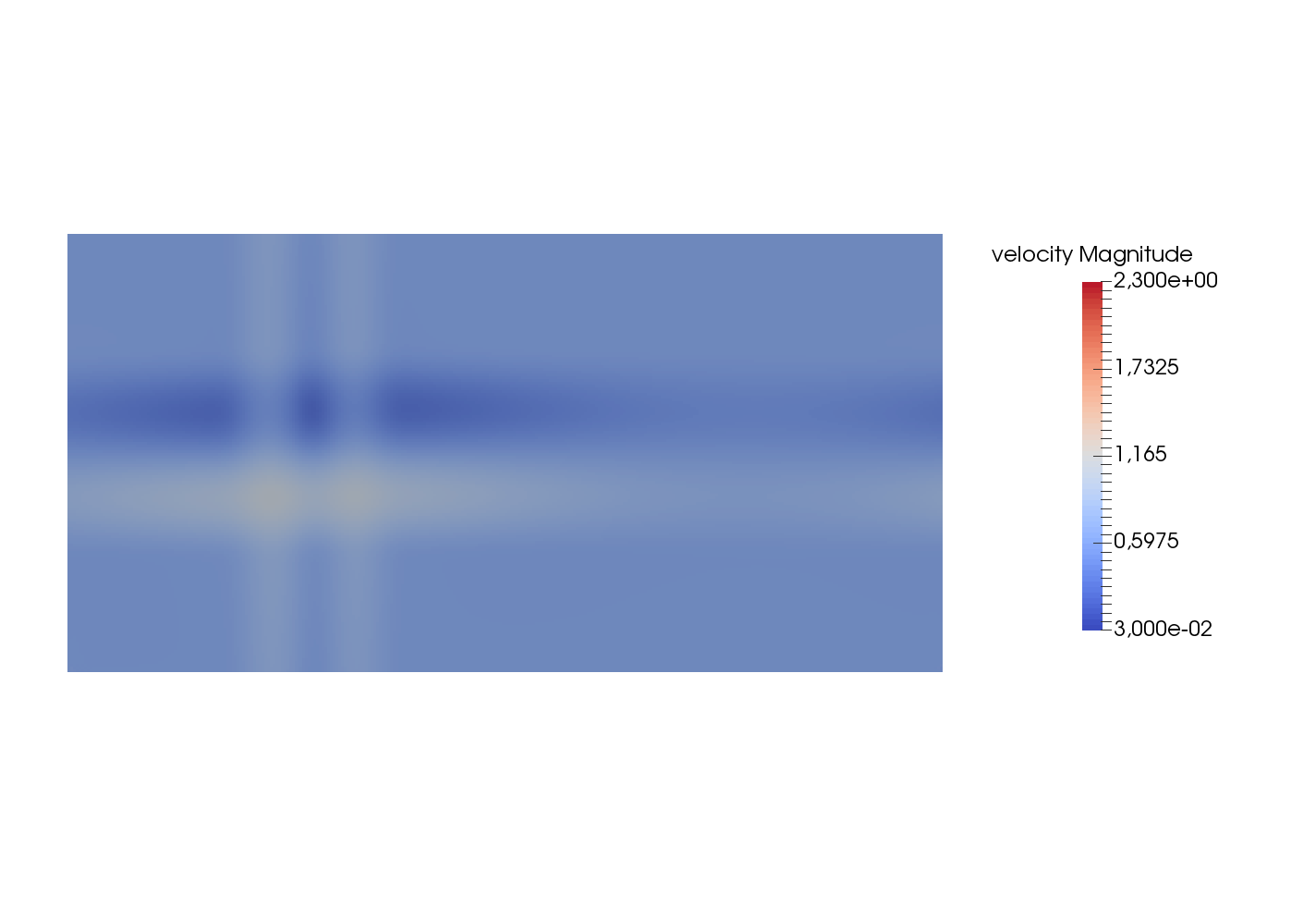}
\hfill
\includegraphics[trim={70pt 250pt 400pt 250pt}, clip, width=0.497\textwidth]{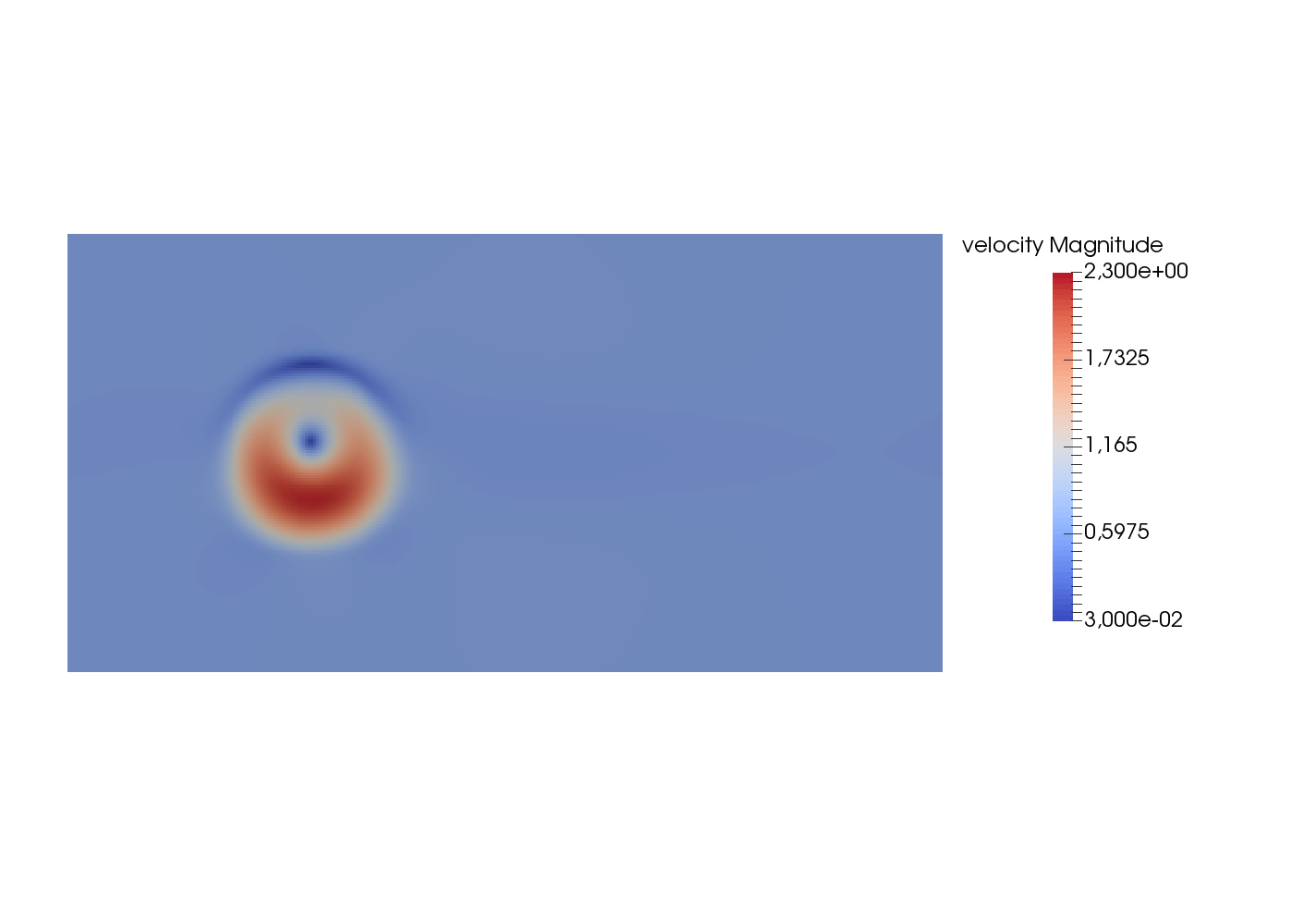}
\end{minipage}
\hfill
\begin{minipage}[c]{.151\textwidth}
\includegraphics[trim={1050pt 280pt 120pt 260pt}, clip, width=0.987\textwidth]{vortex_EXEX_with_topo.png}
\end{minipage}
\end{center}
\caption{Traveling vortex test case with non-flat bottom. Mapping of the velocity magnitude at instant $T_f=0.1$ obtained with the EXEX scheme (top) and IMEX scheme (bot) with $\theta=1$ (left) and $\theta=\cO(\Fr)$ (right).}
\label{fig: results_vortex_non_flat_bottom}
\end{figure}
\section{Conclusion}
We have proposed a large time step and well-balanced scheme for the shallow-water equations in two dimensions for unstructured meshes.
We studied the truncation of the scheme with respect to the Froude number $\Fr$ and gave a correction in accordance to the source term.
By studying the one-dimensional case, we obtained proposed of modification of the scheme that allows to obtain a uniform truncation error with respect to the Froude number for one-dimensional flows.

Moreover, we showed that the semi-implicit scheme yields good numerical results for flows from low to high Froude values since its CFL condition is based on (slow) material waves only. 

Further developments shall include extensions to high-order methods in multiple-dimensions, following for example what has already been achieved with Finite Volume or discontinuous Galerkin methods in 1D. We also intend to adapt the method presented in this work to other compressible flows models involving non-conservative terms.

\bibliography{FV2D}

\begin{thebibliography}{BALMN14}

\bibitem[BALMN14]{BALMN14}
Georgij Bispen, Koottungal~Revi Arun, M{\'a}ria
  Luk{\'a}{\v{c}}ov{\'a}-Medvid’ov{\'a}, and Sebastian Noelle.
\newblock Imex large time step finite volume methods for low {Froude} number
  shallow water flows.
\newblock {\em Communications in Computational Physics}, 16(2):307--347, 2014.

\bibitem[Bou04]{Bou04}
Fran{\c{c}}ois Bouchut.
\newblock {\em Nonlinear stability of finite Volume Methods for hyperbolic
  conservation laws: And Well-Balanced schemes for sources}.
\newblock Springer Science \& Business Media, 2004.

\bibitem[BV94]{BV94}
Alfredo Bermudez and Ma~Elena V{\'a}zquez.
\newblock Upwind methods for hyperbolic conservation laws with source terms.
\newblock {\em Computers \& Fluids}, 23(8):1049--1071, 1994.

\bibitem[CC05]{chalons2}
C.~Chalons and F.~Coquel.
\newblock Navier-stokes equations with several independent pressure laws and
  explicit predictor-corrector schemes.
\newblock {\em Numerische Mathematik}, 101(3):451--478, 2005.

\bibitem[CC08]{chalons2008}
C.~{Chalons} and J.-F. {Coulombel}.
\newblock {Relaxation approximation of the Euler equations.}
\newblock {\em {J. Math. Anal. Appl.}}, 348(2):pp.~872--893, 2008.

\bibitem[CDCdL18]{CCL18}
Manuel~J Castro~D{\'i}az, Christophe Chalons, and Tom{\'a}s~Morales de~Luna.
\newblock A fully well-balanced {Lagrange}--projection-type scheme for the
  shallow-water equations.
\newblock {\em SIAM Journal on Numerical Analysis}, 56(5):3071--3098, 2018.

\bibitem[CGK13]{CGK13}
Christophe Chalons, Mathieu Girardin, and Samuel Kokh.
\newblock Large time step and asymptotic preserving numerical schemes for the
  gas dynamics equations with source terms.
\newblock {\em SIAM Journal on Scientific Computing}, 35(6):A2874--A2902, 2013.

\bibitem[CGK14]{CGK14}
Christophe Chalons, Mathieu Girardin, and Samuel Kokh.
\newblock Operator-splitting based {AP} schemes for the {1D} and {2D} gas
  dynamics equations with stiff sources.
\newblock {\em AIMS Series on Applied Mathematics}, 8:607--614, 2014.

\bibitem[CGK16]{CGK16}
Christophe Chalons, Mathieu Girardin, and Samuel Kokh.
\newblock An all-regime {Lagrange}-projection like scheme for the gas dynamics
  equations on unstructured meshes.
\newblock {\em Communications in Computational Physics}, 20(1):188--233, 2016.

\bibitem[CKKS17]{CKKS17}
Christophe Chalons, Pierre Kestener, Samuel Kokh, and Maxime Stauffert.
\newblock A large time-step and well-balanced {Lagrange}-projection type scheme
  for the shallow water equations.
\newblock {\em Communication in Mathematical Sciences}, 15(3):765--788, 2017.

\bibitem[CNPT10]{CNPT10}
Fr{\'e}d{\'e}ric Coquel, Quang Nguyen, Marie Postel, and Quang Tran.
\newblock Entropy-satisfying relaxation method with large time-steps for
  {Euler} {IBVPs}.
\newblock {\em Mathematics of Computation}, 79(271):1493--1533, 2010.

\bibitem[Del10]{Del10}
St{\'e}phane Dellacherie.
\newblock Analysis of {Godunov} type schemes applied to the compressible
  {Euler} system at low {Mach} number.
\newblock {\em Journal of Computational Physics}, 229(4):978--1016, 2010.

\bibitem[Des10]{Despres2010-book}
B.~Despr\'es.
\newblock {\em {Lois de conservations Eul\'eriennes, Lagrangiennes et
  m\'ethodes num\'eriques}}, volume~68 of {\em Math\'ematiques et applications,
  SMAI}.
\newblock Springer, 2010.

\bibitem[Gos13]{Gos13}
Laurent Gosse.
\newblock {\em Computing qualitatively correct approximations of balance laws},
  volume~2.
\newblock Springer, 2013.

\bibitem[GR96]{GR96}
Edwige Godlewski and Pierre-Arnaud Raviart.
\newblock {\em Numerical Approximation of Hyperbolic Systems of Conservation
  Laws}, volume 118.
\newblock Springer Science \& Business Media, 1996.

\bibitem[RB09]{RB09}
Mario Ricchiuto and Andreas Bollermann.
\newblock Stabilized residual distribution for shallow water simulations.
\newblock {\em Journal of Computational Physics}, 228(4):1071--1115, 2009.

\bibitem[Zak17]{Zak17}
Hamed Zakerzadeh.
\newblock On the {Mach}-uniformity of the {Lagrange}-projection scheme.
\newblock {\em ESAIM: Mathematical Modelling and Numerical Analysis},
  51(4):1343--1366, 2017.

\end{thebibliography}

\end{document}